\newcommand{\avg}[1]{\left\{\hspace*{-3pt}\left\{#1\right\}\hspace*{-3pt}\right\}}
\newcommand{\gamavg}{\left\{\hspace*{-3pt}\left\{\rho\right\}\hspace*{-3pt}\right\}_{\!\gamma}}
\newcommand{\aAvg}{\overline{a^2}}
\newcommand{\jump}[1]{\ensuremath{\left\llbracket #1 \right\rrbracket}}
\newcommand{\pderivative}[2]{\frac{\partial #1}{\partial #2}}
\let\rho\varrho
\newcommand\spacevec[1]{\accentset{\,\rightarrow}{#1}}				
\newcommand\statevec[1]{\mathbf #1}							
\newcommand\contrastatevec[1]{\tilde{\mathbf #1}} 					
\newcommand\acclrvec[1]{\accentset{\,\leftrightarrow}{#1}}			
\newcommand\bigstatevec[1]{\acclrvec{{\mathbf #1}}}				
\newcommand\bigcontravec[1]{\acclrvec{\tilde{\mathbf #1}}} 			
\newcommand\threeMatrix[1]{\underline{ #1}}						
\newcommand\nineMatrix[1]{\mathsf{ #1}}							
\newcommand\matrixvec[1]{\mathcal #1}							
\newcommand{\ec}{{\mathrm{EC}}}								
\newcommand{\es}{{\mathrm{ES}}}								
\definecolor{chcol}{rgb}{0.4,0.,0.9}
\theoremstyle{plain}
\newtheorem{theorem}{Theorem}
\newtheorem{lemma}{Lemma}
\newtheorem{corollary}{Corollary}
\theoremstyle{remark}
\newtheorem{definition}{Definition}
\newtheorem{remark}{Remark}
\numberwithin{equation}{section}
\begin{document}

\title[ES numerical methods for isothermal and polytropic Euler]{Entropy stable numerical approximations for the isothermal and polytropic Euler equations}
%
%
\author[Winters]{Andrew~R.~Winters$^{1,*}$}
\address{$^1$Department of Mathematics; Computational Mathematics, Link\"{o}ping University, SE-581 83 Link\"{o}ping, Sweden}
\author[Czernik]{Christof Czernik}
\author[Schily]{Moritz B. Schily}
\address{$^2$Mathematical Institute, University of Cologne, Weyertal 86-90, 50931 Cologne, Germany}
\author[Gassner]{Gregor~J.~Gassner$^{3}$}
\address{$^3$Department for Mathematics and Computer Science; Center for Data and Simulation Science, University of Cologne, Weyertal 86-90, 50931, Cologne, Germany}
\email{andrew.ross.winters@liu.se}

%

\maketitle

\begin{abstract}

	In this work we analyze the entropic properties of the Euler equations when the system is closed with the assumption of a polytropic gas. In this case, the pressure solely depends upon the density of the fluid and the energy equation is not necessary anymore as the mass conservation and momentum conservation then form a closed system. Further, the total energy acts as a convex mathematical entropy function for the polytropic Euler equations. The polytropic equation of state gives the pressure as a scaled power law of the density in terms of the adiabatic index $\gamma$. As such, there are important limiting cases contained within the polytropic model like the isothermal Euler equations ($\gamma=1$) and the shallow water equations ($\gamma=2$). We first mimic the continuous entropy analysis on the discrete level in a finite volume context to get special numerical flux functions. Next, these numerical fluxes are incorporated into a particular discontinuous Galerkin (DG) spectral element framework where derivatives are approximated with summation-by-parts operators. This guarantees a high-order accurate DG numerical approximation to the polytropic Euler equations that is also consistent to its auxiliary total energy behavior. Numerical examples are provided to verify the theoretical derivations, i.e., the entropic properties of the high order DG scheme. 

\end{abstract}
\vspace{0.5cm}
\noindent\textbf{Keywords:} Isothermal Euler, Polytropic Euler, Entropy stability, Finite volume, Summation-by-parts, Nodal discontinuous Galerkin spectral element method
\section{Introduction}

	The compressible Euler equations of gas dynamics
	\begin{equation}\label{eq:compEuler}
	\begin{aligned}
		\rho_t + \spacevec{\nabla}\cdot(\rho\spacevec{v}\,) &= 0,\\[0.05cm]
		(\rho\spacevec{v}\,)_t + \spacevec{\nabla}\cdot\left(\rho\spacevec{v}\otimes\spacevec{v}\,\right) + \spacevec{\nabla} p &= \spacevec{0},\\[0.05cm]
		E_t + \spacevec{\nabla}\cdot\left(\spacevec{v}\,[E + p]\right) &= 0,
	\end{aligned}
	\end{equation}
	are a system of partial differential equations (PDEs) where the conserved quantities are the mass $\rho$, the momenta $\rho\spacevec{v}$, and the total energy $E = \frac{\rho}{2}\|\spacevec{v}\|^2 + \rho e$.  This is an archetypical system of non-linear hyperbolic conservation laws that have far reaching applications in engineering and natural sciences, e.g. \cite{kundu2008,leveque2002,whitham1974}. In three spatial dimensions, this system has five equations but six unknowns: the density $\rho\in\mathbb{R}^+$, the velocity components $\spacevec{v}=(v_1,v_2,v_3)\in\mathbb{R}^3$, the internal energy $e\in\mathbb{R}$, and the pressure $p\in\mathbb{R}^+$. Thus, in order to close the system, an \textit{equation of state} is necessary to relate thermodynamic state variables like pressure, density, and internal energy. Depending on the fluid and physical processes we wish to model the equation of state changes. Some examples include an ideal gas where $p\equiv p(\rho,e)$ or polytropic processes where $p\equiv p(\rho)$ \cite{kundu2008}.
	
	The connection between the equation of state, the fluid, and other thermodynamic properties is of particular relevance when examining the physical realizability of flow configurations. In particular, the \textit{entropy} plays a crucial role to separate possible flow states from the impossible \cite{cengel2014}. There is a long history investigating the thermodynamic properties of the compressible Euler equations through the use of mathematical entropy analysis for adiabatic processes \cite{harten1983,mock1980,Tadmor2003} as well as polytropic processes \cite{chen2005,serre1999}. In this analysis the mathematical entropy is modeled by a strongly convex function $s(\rho,e)$. There exist associated entropy fluxes, $\spacevec{f}^s$, such that the entropy function satisfies an additional conservation law 
	\begin{equation*}
		s_t + \spacevec{\nabla}\cdot\spacevec{f}^s = 0,
	\end{equation*}
	for smooth solutions that becomes an inequality 
	\begin{equation*}
		s_t + \spacevec{\nabla}\cdot\spacevec{f}^s \leq 0,
	\end{equation*}
in the presence of discontinuous solutions, e.g. shocks.
Note, we have adopted the convention common in mathematics that entropy is a decreasing quantity, e.g. \cite{Tadmor2003}.
	
	For numerical methods, discretely mimicking this thermodynamic behavior leads to schemes that are entropy conservative (or entropy stable) depending on the solutions smoothness \cite{harten1983,Tadmor2003}. Additionally, numerical approximations, especially schemes with higher order accuracy and low inbuilt numerical dissipation, that are thermodynamically consistent have a marked increase in their robustness \cite{chan2018,Gassner:2016ye,kuya2018,Winters2018}. Thus, the design and application of entropy stable approximations, particularly for the compressible Euler equations, have been the subject of ongoing research for the past 50 years, e.g. \cite{carpenter_esdg,chan2018,Chandrashekar2012,Chen2017,crean2018,fisher2013,fisher2013_2,Gassner:2016ye,harten1983,IsmailRoe2009,Ray2016,Tadmor2003}. A major breakthrough came with the seminal work of Tadmor \cite{tadmor1984} wherein he developed a general condition for a finite volume numerical flux function to remain entropy conservative. It was then possible to selectively add dissipation to the baseline numerical approximation and guarantee entropy stability.
	
	Many authors expanded on the entropy stability work of Tadmor, developing higher order spatial approximations through the use of WENO reconstructions \cite{Fjordholm2012_2,Fjordholm2016,Lefloch2002}, summation-by-parts (SBP) finite difference approximations \cite{crean2018,fisher2013,fisher2013_2}, or the discontinuous Galerkin spectral element method (DGSEM) also with the SBP property \cite{carpenter_esdg,chan2018,Chen2017,Gassner:2016ye,Gassner2017}. The latter two numerical schemes both utilize the SBP property that discretely mimics integration-by-parts. This allows a direct translation of the continuous analysis and entropy stability proofs onto the discrete level, see \cite{gassner_skew_burgers,svard2014} for details. However, the design of these entropy stable approximations (low-order or high-order) has focused on adiabatic processes for the compressible Euler equations.
	
	So, the main focus in this work is to design entropy conservative and entropy stable numerical methods for the polytropic Euler equations. As such, the mathematical entropy analysis is reinvestigated on the continuous level due to the selection of a different equation of state. This analysis also provides a roadmap to discrete entropy stability. We will show that isothermal limit ($\gamma = 1$) requires special considerations. The first contribution comes with the derivation of entropy conservative/stable numerical flux functions from Tadmor's finite volume condition. This includes a computationally affordable definition of the baseline entropy conservative numerical flux as well as an explicit definition of the average states where the dissipation terms should be evaluated. In particular, a special mean operator, which is a generalization of the logarithmic mean \cite{stolarsky1975}, is introduced. The second contribution takes the finite volume derivations and builds them into a high-order DGSEM framework that remains consistent to the laws of thermodynamics. Complete details on the entropy aware DGSEM are given by Gassner et al. \cite{Gassner:2016ye}.
	
	The paper is organized as follows: Sect.~\ref{sec:polyEuler} presents the polytropic Euler system and performs the continuous mathematical entropy analysis. The derivations are kept general as the isothermal Euler equations are a special case of the polytropic system. The finite volume discretization and entropy stable numerical flux derivations are given in Sect.~\ref{sec:discEntropy}. In Sect.~\ref{sec:DG}, a generalization of the entropy stable polytropic Euler method into a high-order DGSEM framework is provided. Numerical investigations in Sect.~\ref{sec:numRes} verify the high-order nature of the approximations as well as the entropic properties. Concluding remarks are given in the final section.

\section{Polytropic Euler equations}\label{sec:polyEuler}

We first introduce notation that simplifies the continuous and discrete entropy analysis of the governing equations in this work. The state vector of conserved quantities  is $\statevec{u}$ and the Cartesian fluxes are denoted by $\statevec{f}_1,\,\statevec{f}_2,\,\statevec{f}_3$. As in \cite{bohm2018,Gassner2017}, we define block vector notation with a double arrow
\begin{equation*}
\bigstatevec{f} = \begin{bmatrix}
\statevec{f}_1\\[0.05cm]
\statevec{f}_2\\[0.05cm]
\statevec{f}_3\\[0.05cm]
\end{bmatrix}.
\end{equation*}
The dot product of a spatial vector with a block vector results in a state vector
\begin{equation*} 
\spacevec g\cdot\bigstatevec f  = \sum\limits_{i = 1}^3 {{{ g}_i}{{\statevec f}_i}}. 
\end{equation*}
Thus, the divergence of a block vector is
\begin{equation*}
\spacevec\nabla  \cdot \bigstatevec f = \left(\statevec{f}_1\right)_{\!x} + \left(\statevec{f}_2\right)_{\!y} + \left(\statevec{f}_3\right)_{\!z}.
\end{equation*}
This allows a compact presentation for systems of hyperbolic conservation laws 
\begin{equation}\label{eq:compactConsLaw}
\statevec{u}_t + \spacevec{\nabla}\cdot\bigstatevec{f} = \statevec{0},
\end{equation}
on a domain $\Omega\subset\mathbb{R}^3$.  

\subsection{Governing equations}

The polytropic Euler equations are a simplified version of the compressible Euler equations \eqref{eq:compEuler} which explicitly conserves the mass and momenta. In the equation of state for polytropic fluids the pressure depends solely on the fluid density and the total energy conservation law becomes redundant \cite{chen2005}. The simplified system takes the form of non-linear conservation laws \eqref{eq:compactConsLaw} with
\begin{equation*}
\statevec{u} = \begin{bmatrix}
\rho\\[0.05cm]
\rho\spacevec{v}\\[0.05cm]
\end{bmatrix},
\qquad
\bigstatevec{f} = \begin{bmatrix}
\rho\spacevec{v}\\[0.05cm]
\rho\spacevec{v}\otimes\spacevec{v} + p\threeMatrix{I}\\[0.05cm]
\end{bmatrix},
\end{equation*}
where $\threeMatrix{I}$ is a $3\times 3$ identity matrix. We close the system with the polytropic or the isothermal gas assumption, which relate density and pressure:
\begin{equation}\label{eq:eos}
\text{polytropic case:} \quad p(\rho)=\kappa \rho^\gamma,
\qquad\text{isothermal case:} \quad p(\rho)=c^2 \rho.
\end{equation}
For a polytropic gas $ \gamma > 1$ is the adiabatic coefficient and $ \kappa > 0 $ is some scaling factor depending on the fluid, e.g. for the shallow water equations with $\kappa = g/2$ (gravitational acceleration) and $\gamma = 2$ \cite{Fjordholm2011}. For the isothermal case $\gamma=1$ and $c > 0$ is the speed of sound \cite{chen2005}. To keep the analysis of the polytropic Euler equations general, we will only specify which equation of state is used when necessary. Further, in barotropic models the internal energy, $e(\rho)$, and the pressure form an admissible pair provided the ordinary differential equation
\begin{equation}\label{eq:innerPressure}
\rho\frac{de}{d\rho} = \frac{p(\rho)}{\rho},
\end{equation}
is satisfied \cite{chen2005}.
For the equations of state \eqref{eq:eos} the corresponding internal energies are
\begin{equation}\label{eq:innerEnergy}
\text{polytropic case:} \quad e(\rho)= \frac{\kappa \rho^{\gamma-1}}{\gamma-1},
\qquad\text{isothermal case:} \quad e(\rho)=c^2 \ln(\rho).
\end{equation}

\subsection{Continuous entropy analysis}

We define the necessary components to discuss the thermodynamic properties of \eqref{eq:compactConsLaw} from a mathematical perspective. To do so, we utilize well-developed entropy analysis tools, e.g. \cite{harten1983,mock1980,Tadmor1987}. First, we introduce an entropy function used to define an injective mapping between state space and entropy space \cite{harten1983,mock1980}. 

For the polytropic Euler equations, a suitable mathematical entropy function is the total energy of the system \cite{chen2005}
\begin{equation}\label{eq:totalEnergy}
s(\statevec{u}) = \frac{\rho}{2}\|\spacevec{v}\|^2 + \rho e(\rho),
\end{equation}
with the internal energy taken from \eqref{eq:innerEnergy}. Note that the entropy function $ s(\statevec{u}) $ is strongly convex under the physical assumption that $\rho>0$. From the entropy function we find the entropy variables to be
\begin{equation}\label{eq:entVars}
\statevec{w} = \pderivative{s}{\statevec{u}} = \left(e + \rho\frac{de}{d\rho} - \frac{1}{2}\|\spacevec{v}\|^2,v_1,v_2,v_3\right)^T=\left(e + \frac{p}{\rho} - \frac{1}{2}\|\spacevec{v}\|^2,v_1,v_2,v_3\right)^T\!\!,
\end{equation}
where we use the relation \eqref{eq:innerPressure} to simplify the first entropy variable. The mapping between state space and entropy space is equipped with symmetric positive definite (s.p.d) entropy Jacobian matrices, e.g., \cite{Tadmor1987} 
\begin{equation*}
\nineMatrix{H}^{-1} = \pderivative{\statevec{w}}{\statevec{u}},
\end{equation*}
and
\begin{equation}\label{eq:entJac}
\nineMatrix{H} = \frac{1}{a^2}\begin{bmatrix}
\rho & \rho v_1 & \rho v_2 & \rho v_3\\[0.1cm]
\rho v_1 & \rho v_1^2 + a^2\rho & \rho v_1 v_2 & \rho v_1 v_3\\[0.1cm]
\rho v_2 & \rho v_1 v_2 & \rho v_2^2 + a^2\rho& \rho v_2 v_3 \\[0.1cm]
\rho v_3 & \rho v_1 v_3 & \rho v_2 v_3 & \rho v_3^2 + a^2\rho\\[0.1cm]
\end{bmatrix},
\end{equation}
where we introduce a general notation for the sound speed
\begin{equation*}
a^2 = \frac{\gamma p}{\rho}.
\end{equation*}
We note that this statement of $\nineMatrix{H}$ is general for either equation of state from \eqref{eq:eos}. The entropy fluxes, $\spacevec{f}^{\,s}$, associated with the entropy function \eqref{eq:totalEnergy} are
\begin{equation}\label{eq:entFluxes}
\spacevec{f}^{\,s} = (f_1^{s},f_2^{s},f_3^{s})^T = \spacevec{v} \left(s + p\right).
\end{equation}
Finally, we compute the entropy flux potential that is needed later in Sect.~\ref{sec:ECFlux} for the construction of entropy conservative numerical flux functions
\begin{equation}\label{eq:entPot}
\spacevec{\Psi} = \statevec{w}^T\bigstatevec{f} - \spacevec{f}^{\,s} = \spacevec{v}p.
\end{equation}

To examine the mathematical entropy conservation we contract the system of conservation laws \eqref{eq:compactConsLaw} from the left with the entropy variables \eqref{eq:entVars}. By construction, and assuming continuity, the time derivative term becomes
\begin{equation*}
\statevec{w}^T\pderivative{\statevec{u}}{t} = \pderivative{s}{t}.
\end{equation*}
The contracted flux terms, after many algebraic manipulations, yield
\begin{equation*}
\statevec{w}^T\spacevec{\nabla}\cdot\bigstatevec{f} = \cdots = \spacevec{\nabla}\cdot\left(\frac{\rho\spacevec{v}}{2}\|\spacevec{v}\|^2 + \rho e \spacevec{v}\right)
=
\spacevec{\nabla}\cdot\left(\spacevec{v}\left[s + p \right]\right)
=\spacevec{\nabla}\cdot\spacevec{f}^{\,s}.
\end{equation*}
Therefore, for smooth solutions contracting \eqref{eq:compactConsLaw} into entropy space yields an additional conservation law for the total energy
\begin{equation}\label{eq:entCons}
\statevec{w}^T\left(\statevec{u}_t + \spacevec{\nabla}\cdot\bigstatevec{f}\right) = 0\qquad \Rightarrow\qquad s_t + \spacevec{\nabla}\cdot\spacevec{f}^{\,s} = 0.
\end{equation}
Generally, discontinuous solutions can develop for non-linear hyperbolic systems, regardless of their initial smoothness. In the presence of discontinuities, the mathematical entropy conservation law \eqref{eq:entCons} becomes the entropy inequality \cite{Tadmor1987}
\begin{equation*}
s_t + \spacevec{\nabla}\cdot\spacevec{f}^{\,s} \leq 0.
\end{equation*}

Note, due to the form of the entropy fluxes \eqref{eq:entFluxes} the mathematical entropy conservation law \eqref{eq:entCons} has an identical form to the conservation of total energy from the adiabatic compressible Euler equations \eqref{eq:compEuler}. This reinforces that the total energy becomes an \textit{auxiliary} conserved quantity for polytropic gases.

\subsection{Eigenstructure of the polytropic Euler equations}

To close this section we investigate the eigenstructure of the polytropic Euler equations. We do so to demonstrate the hyperbolic character of the governing equations. Additionally, a detailed description of the eigenvalues and eigenvectors is needed to select a stable explicit time step \cite{courant1967} as well as design operators that selectively add dissipation to the different propagating waves in the system, e.g. \cite{Winters2017}. 

To simplify the eigenstructure discussion of the polytropic Euler system, we limit the investigation to one spatial dimension. This restriction simplifies the analysis and is done without loss of generality, because the spatial directions are decoupled and the polytropic Euler equations are rotationally invariant. To begin we state the one-dimensional form of \eqref{eq:compactConsLaw}
\begin{equation*}
\statevec{u}_t + (\statevec{f}_1)_x = \statevec{0},
\end{equation*}
where we have
\begin{equation*}
\statevec{u} = \left[\rho,\rho v_1,\rho v_2,\rho v_3\right]^T,\qquad
\statevec{f}_1 = \left[\rho v_1\,,\,\rho v_1^2 + p\,,\,\rho v_1 v_2\,,\,\rho v_1 v_3\right]^T.
\end{equation*}
We find the flux Jacobian matrix to be
\begin{equation}\label{eq:fluxJac}
\nineMatrix{A} = \pderivative{\statevec{f}_1}{\statevec{u}} = 
\begin{bmatrix}
0 & 1 & 0 & 0  \\[0.1cm]
a^2 - v_1^2 & 2v_1 & 0 & 0 \\[0.1cm]
-v_1 v_2 & v_2 & v_1 & 0 \\[0.1cm]
-v_1 v_3 & v_3 & 0 & v_1 \\[0.1cm]
\end{bmatrix}.
\end{equation}
The eigenvalues, $\{\lambda_i\}_{i=1}^4$, of \eqref{eq:fluxJac} are all real
\begin{equation}\label{eq:evals}
\lambda_1 = v_1 - a \qquad \lambda_2=v_1 \qquad \lambda_3=v_1 \qquad \lambda_4 = v_1 + a.
\end{equation}
The eigenvalues are associated with a full set of right eigenvectors. A matrix of right eigenvectors is
\begin{equation}\label{eq:rightEvecs}
\nineMatrix{R} = \left[\statevec{r}_1\,|\,\statevec{r}_2\,|\,\statevec{r}_3\,|\,\statevec{r}_4\right] = \begin{bmatrix}
1 & 0 & 0 & 1\\[0.05cm]
v_1 - a & 0 & 0 & v_1 + a\\[0.05cm]
v_2 & 1 & 0 & v_2\\[0.05cm]
v_3 & 0 & 1 & v_3\\[0.05cm]
\end{bmatrix}.
\end{equation}
From the work of Barth \cite{Barth1999}, there exists a positive diagonal scaling matrix $\nineMatrix{Z}$ that relates the right eigenvector matrix \eqref{eq:rightEvecs} to the entropy Jacobian matrix \eqref{eq:entJac}
\begin{equation}\label{eq:evEntScaling}
\nineMatrix{H} = \nineMatrix{R}\nineMatrix{Z}\nineMatrix{R}^T.
\end{equation}
For the polytropic Euler equations this diagonal scaling matrix is
\begin{equation*}
\nineMatrix{Z} =  \mathrm{diag}\left(\frac{\rho}{2a^2}\,,\,\rho\,,\,\rho\,,\,\frac{\rho}{2a^2}\right). 
\end{equation*}
We will revisit the eigenstructure of the polytropic Euler equations and this eigenvector scaling in Sect.~\ref{sec:ESFlux} in order to derive an entropy stable numerical dissipation term.

\section{Discrete entropy analysis, finite volume, and numerical fluxes}\label{sec:discEntropy}

In this section we derive entropy conservative and entropy stable numerical flux functions for the polytropic Euler equations. This discrete analysis is performed in the context of finite volume schemes and follows closely the work of Tadmor \cite{Tadmor2003}. The derivations for entropy conservative numerical flux functions and appropriate dissipation terms are straightforward, albeit algebraically involved. Therefore, we restrict the discussion to the one dimensional version of the model for the sake of simplicity. As such, we suppress the subscript on the physical flux and simply state $\statevec{f}$.

\subsection{Finite volume discretization}

Finite volume methods are a discretization technique particularly useful to approximate the solution of hyperbolic systems of conservation laws. The method is developed from the integral form of the equations \cite{leveque2002}
\begin{equation*}
\int\limits_{\Omega} \statevec{u}_t\,\mathrm{d}\spacevec{x} + \int\limits_{\partial\Omega}\statevec{f}\cdot\spacevec{n}\,\mathrm{d}S = 0,
\end{equation*}
where $\spacevec{n}$ is the outward pointing normal vector. In one spatial dimension we divide the interval into non-overlapping cells
\begin{equation*}
\Omega_i = \left[x_{i-\tfrac{1}{2}},x_{i+\tfrac{1}{2}}\right],
\end{equation*}
and the integral equation contributes
\begin{equation*}
\frac{d}{dt}\int\limits_{x_{i-1/2}}^{x_{i+1/2}}\statevec{u}\,\mathrm{d}x + \statevec{f}^*(x_{i+1/2}) - \statevec{f}^*(x_{i-1/2})  = 0,
\end{equation*}
on each cell.
The solution approximation is assumed to be a constant value within the volume. Then we determine the cell average value with, for example, a midpoint quadrature of the solution integral
\begin{equation*}
\int\limits_{x_{i-1/2}}^{x_{i+1/2}}\statevec{u}\,\mathrm{d}x \approx \int\limits_{x_{i-1/2}}^{x_{i+1/2}}\statevec{u}_i\,\mathrm{d}x = \statevec{u}_i\Delta x_i.
\end{equation*}
Due to the integral form of the finite volume scheme the solution is allowed to be discontinuous at the boundaries of the cells. To resolve this, we introduce a numerical flux, $\statevec{f}^*\left(\statevec{u}_L,\statevec{u}_R\right)$ \cite{leveque2002,toro2009} which is a function of two solution states at a cell interface and returns a single flux value. For consistency, we require that
\begin{equation}\label{eq:consistencyFV}
\statevec{f}^*\left(\statevec{q},\statevec{q}\right) = \statevec{f},
\end{equation}
such that the numerical flux is equivalent to the physical flux when evaluated at two identical states.

The resulting finite volume spatial approximation takes the general form
\begin{equation}\label{eq:finalFiniteVolume}
\left(\statevec{u}_t\right)_i + \frac{1}{\Delta x_i}\left(\statevec{f}^*_{x+\tfrac{1}{2}} - \statevec{f}^*_{x-\tfrac{1}{2}}\right) = \statevec{0}.
\end{equation}
This results in a set of temporal ordinary differential equations that can be integrated with an appropriate ODE solver, e.g., explicit Runge-Kutta methods.

To complete the spatial approximation \eqref{eq:finalFiniteVolume} requires a suitable numerical flux function $\statevec{f}^*$. Next, following the work of Tadmor, we will develop entropy conservative and entropy stable numerical fluxes for the polytropic Euler equations.

\subsection{Entropy conservative numerical flux}\label{sec:ECFlux}

First, we develop the entropy conservative flux function $\statevec{f}^{*,\ec}$ that is valid for smooth solutions and acts as the baseline for the entropy stable numerical approximation. We assume left and right cell averages, denoted by $L$ and $R$, on uniform cells of size $\Delta x$ separated by a common interface. We discretize the one-dimensional system semi-discretely and derive an approximation for the fluxes at the interface between the two cells, i.e., at the $i+1/2$ interface:
\begin{equation}\label{eq:twoStates}
\Delta x \pderivative{\statevec{u_L}}{t} = \statevec{f}_L - \statevec{f}^*\qquad\mathrm{and}\qquad\Delta x \pderivative{\statevec{u_R}}{t} = \statevec{f}^* - \statevec{f}_R,
\end{equation}
where the adjacent states feature the physical fluxes $\statevec{f}_{L,R}$ and the numerical interface flux $\statevec{f}^*$. We define the jump in a quantity across an interface by
\begin{equation*}
\jump{\cdot} = (\cdot)_R - (\cdot)_L.
\end{equation*}
Next, we contract \eqref{eq:twoStates} into entropy space to obtain the semi-discrete entropy update in each cell
\begin{equation}\label{eq:twoStates2}
\Delta x\pderivative{s_L}{t} = \statevec{w}_L^T\left(\statevec{f}_L - \statevec{f}^*\right)\qquad\mathrm{and}\qquad \Delta x\pderivative{s_R}{t} = \statevec{w}_R^T\left(\statevec{f}^* - \statevec{f}_R\right),
\end{equation}
where we assume continuity in time such that $s_t = \statevec{w}^T\statevec{u}_t$.

Next, we add the contributions from each side of the interface in \eqref{eq:twoStates2} to obtain the total entropy update
\begin{equation}\label{eq:discreteEntUpdate1}
\Delta x\pderivative{}{t}\left(s_L+s_R\right) = \jump{\statevec{w}}^T\statevec{f}^* - \jump{\statevec{w}^T\statevec{f}}.
\end{equation}
To ensure that the finite volume update satisfies the discrete entropy conservation law, the entropy flux of the finite volume discretization must coincide with the discrete entropy flux $f^{s}$, i.e.,
\begin{equation*}
 \jump{\statevec{w}}^T\statevec{f}^* - \jump{\statevec{w}^T\statevec{f}} \stackrel[]{!}{=}-\jump{f^s}.
\end{equation*}
We use the linearity of the jump operator and rearrange to obtain the general entropy conservation condition of Tadmor \cite{Tadmor1987}
\begin{equation}\label{eq:cond1}
\jump{\statevec{w}}^T\statevec{f}^* = \jump{\statevec{w}^T\statevec{f} - f^s} = \jump{\Psi},
\end{equation}
where we apply the definition of the entropy flux potential \eqref{eq:entPot}. The discrete entropy conservation condition \eqref{eq:cond1} is a single constraint for a vector quantity. Thus, the form of the entropy conservative numerical flux is not unique. However, the resulting numerical flux form \eqref{eq:cond1} must remain consistent \eqref{eq:consistencyFV}.

To derive an entropy conservative flux we note the properties of the jump operator
\begin{equation}\label{eq:jumpProps}
\jump{ab} = \avg{a}\jump{b} + \avg{b}\jump{a}, \quad \jump{a^2} = 2\avg{a}\jump{a},
\end{equation}
where we introduce notation for the arithmetic mean
\begin{equation*}
\avg{\cdot} = \frac{1}{2}\left((\cdot)_R+(\cdot)_L\right).
\end{equation*}
For the numerical flux to remain applicable to either equation of state \eqref{eq:eos} we require a generalized average operator for the fluid density.
\begin{definition}[$\gamma$-mean]\label{def:gamAvg}
Assuming that $\rho_L\ne\rho_R$, a special average of the fluid density is
\begin{equation}\label{eq:gammaAvg}
\gamavg = \frac{1}{\gamma}\frac{\jump{p}}{\jump{e}}.
\end{equation}
We examine the evaluation of the average \eqref{eq:gammaAvg} for three cases substituting the appropriate forms of the pressure \eqref{eq:eos} and internal energy \eqref{eq:innerEnergy}:
\begin{enumerate}
\item Polytropic ($\gamma>1$) yields
\begin{equation}\label{eq:gam1}
\gamavg = \frac{1}{\gamma}\frac{\jump{\kappa\rho^\gamma}}{\jump{\frac{\kappa}{\gamma-1}\rho^{\gamma-1}}} = \frac{\gamma-1}{\gamma}\frac{\jump{\rho^\gamma}}{\jump{\rho^{\gamma-1}}}.
\end{equation}
\item Isothermal ($\gamma=1$) where the special average becomes the logarithmic mean which also arises in the construction of entropy conservative fluxes for the adiabatic Euler equations \cite{Chandrashekar2012,IsmailRoe2009}
\begin{equation}\label{eq:logMean}
\gamavg = \frac{1}{1}\frac{\jump{c^2\rho}}{\jump{c^2\ln(\rho)}} = \frac{\jump{\rho}}{\jump{\ln(\rho)}} \coloneqq \rho^{\ln}.
\end{equation}
\item Shallow water ($\gamma=2$) for which the special average reduces to the arithmetic mean
\begin{equation*}
\gamavg = \frac{1}{2}\frac{\jump{\rho^2}}{\jump{\rho}} = \frac{1}{2}\frac{2\avg{\rho}\jump{\rho}}{\jump{\rho}} = \avg{\rho}.
\end{equation*}
\end{enumerate}
\end{definition}
\begin{remark}
The $\gamma$-mean \eqref{eq:gam1} is a special case of the weighted Stolarsky mean, which serves as a generalization of the logarithmic mean \cite{stolarsky1975}. It remains consistent when the left and right states are identical. Also, assuming without loss of generality that $\rho_L<\rho_R$, it is guaranteed that the value of $\gamavg\in[\rho_L,\rho_R]$ \cite{stolarsky1975,stolarsky1980}.
\end{remark}
\begin{remark}
In practice, when the left and right fluid density values are close, there are numerical stability issues because the $\gamma$-mean \eqref{eq:gammaAvg} tends to a $0/0$ form. Therefore, we provide a numerically stable procedure to compute \eqref{eq:gammaAvg} in Appendix \ref{app:gamAvgEval}.
\end{remark}

With Def. \ref{def:gamAvg} and the discrete entropy conservation condition \eqref{eq:cond1} we are equipped to derive an entropy conservative numerical flux function.
\begin{theorem}[Entropy conservative flux]\label{thm:EC}
From the discrete entropy conservation condition condition \eqref{eq:cond1} we find a consistent, entropy conservative numerical flux
\begin{equation}\label{eq:ECflux}
\statevec{f}^{*,\ec} = \begin{bmatrix}
\gamavg\avg{v_1}\\[0.1cm]
\gamavg\avg{v_1}^2 + \avg{p}\\[0.1cm]
\gamavg\avg{v_1}\avg{v_2}\\[0.1cm]
\gamavg\avg{v_1}\avg{v_3}\\[0.1cm]
\end{bmatrix}.
\end{equation}
\end{theorem}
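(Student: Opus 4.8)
The plan is to verify the two defining properties of an entropy conservative flux for \eqref{eq:ECflux}: consistency \eqref{eq:consistencyFV}, and Tadmor's discrete entropy conservation condition \eqref{eq:cond1}. Consistency is immediate: setting $\statevec{u}_L=\statevec{u}_R=\statevec{q}$ collapses every arithmetic mean to its argument, and by the consistency of the $\gamma$-mean (noted in the remark following Def.~\ref{def:gamAvg}) one has $\gamavg\to\rho$; hence \eqref{eq:ECflux} reduces to $\statevec{f}=(\rho v_1,\,\rho v_1^2+p,\,\rho v_1 v_2,\,\rho v_1 v_3)^T$. The substance of the proof is therefore the single scalar identity $\jump{\statevec{w}}^T\statevec{f}^{*,\ec}=\jump{\Psi}$ with $\Psi=v_1 p$ from \eqref{eq:entPot}.

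Next I would expand the left-hand side using the entropy variables \eqref{eq:entVars}, so that $\jump{\statevec{w}}^T\statevec{f}^{*,\ec}=\jump{e+\tfrac{p}{\rho}-\tfrac12\|\spacevec v\|^2}\gamavg\avg{v_1}+\jump{v_1}\bigl(\gamavg\avg{v_1}^2+\avg{p}\bigr)+\jump{v_2}\gamavg\avg{v_1}\avg{v_2}+\jump{v_3}\gamavg\avg{v_1}\avg{v_3}$. Then I would apply the jump product rules \eqref{eq:jumpProps}, in particular $\jump{\tfrac12\|\spacevec v\|^2}=\sum_j\avg{v_j}\jump{v_j}$, and collect terms by the velocity jumps. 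The $\jump{v_2}$ and $\jump{v_3}$ contributions coming from $-\gamavg\avg{v_1}\jump{\tfrac12\|\spacevec v\|^2}$ cancel exactly against the last two rows of \eqref{eq:ECflux}, and the $\jump{v_1}$ piece of that same term cancels $\jump{v_1}\gamavg\avg{v_1}^2$. What survives is $\gamavg\avg{v_1}\jump{e+\tfrac{p}{\rho}}+\avg{p}\jump{v_1}$. Since $\jump{\Psi}=\jump{v_1 p}=\avg{v_1}\jump{p}+\avg{p}\jump{v_1}$, the $\avg{p}\jump{v_1}$ terms match and \eqref{eq:cond1} reduces to the scalar identity $\gamavg\,\jump{e+\tfrac{p}{\rho}}=\jump{p}$.

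The last step closes this identity using Def.~\ref{def:gamAvg}. Since $\gamma\,\gamavg\,\jump{e}=\jump{p}$ by \eqref{eq:gammaAvg}, it suffices to show $\jump{e+\tfrac{p}{\rho}}=\gamma\jump{e}$, i.e.\ $\jump{\tfrac{p}{\rho}}=(\gamma-1)\jump{e}$. In the polytropic case the equation of state \eqref{eq:eos} and internal energy \eqref{eq:innerEnergy} give $p/\rho=\kappa\rho^{\gamma-1}=(\gamma-1)e$ pointwise, so the identity holds exactly; in the isothermal case ($\gamma=1$) both sides vanish because $p/\rho\equiv c^2$. (Equivalently, in the isothermal case one can check $\gamavg\jump{e+p/\rho}=\rho^{\ln}\jump{c^2\ln\rho+c^2}=c^2\jump{\rho}=\jump{p}$ directly from \eqref{eq:logMean}.) Together with consistency, this establishes the theorem.

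The hard part will not be any conceptual step but the bookkeeping in the second paragraph — keeping track of the kinetic-energy and cross-momentum jump terms so the cancellations are transparent — since the density average that makes the final scalar identity go through is already handed to us by Def.~\ref{def:gamAvg}. It is worth remarking that the cancellation pattern is essentially the same as in the adiabatic Euler case; what is genuinely new here is that the first entropy variable contributes the jump $\jump{e+p/\rho}$, which is precisely why the Stolarsky-type $\gamma$-mean \eqref{eq:gam1}, rather than merely the logarithmic mean, is the correct choice of density average for the polytropic system.
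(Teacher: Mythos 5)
Your proof is correct and uses essentially the same ingredients as the paper's: the expansion $\jump{w_1}=\gamma\jump{e}-\sum_j\avg{v_j}\jump{v_j}$ (justified case-by-case via $p/\rho=(\gamma-1)e$, resp.\ $p/\rho=c^2$), the jump product rule for $\jump{v_1p}$, and the defining property $\gamma\gamavg\jump{e}=\jump{p}$ of the $\gamma$-mean. The only difference is one of direction --- you verify that the stated flux satisfies condition \eqref{eq:cond1}, whereas the paper solves the grouped jump equations to derive the flux components --- and this amounts to the same algebra.
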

\begin{proof}
We first expand the right-hand-side of the entropy conservation condition \eqref{eq:cond1}
\begin{equation}\label{eq:cond33}
\jump{\Psi} = \jump{pv_1} = \avg{v_1}\jump{p}+ \avg{p}\jump{v_1},
\end{equation}
where we use the jump properties \eqref{eq:jumpProps}. Next, we expand the jump in the entropy variables. To do so, we revisit the form of the entropy variable $w_1$ as it changes depending on the equation of state
\begin{equation}\label{eq:entVarNumber1}
w_1 = \begin{cases}
e +\frac{\kappa\rho^\gamma}{\rho} -\frac{1}{2}\|\spacevec{v}\|^2 = \frac{\kappa\rho^{\gamma-1}}{\gamma-1} + \kappa\rho^{\gamma-1} -\frac{1}{2}\|\spacevec{v}\|^2 = \gamma e - \frac{1}{2}\|\spacevec{v}\|^2, & \mathrm{polytropic}\\[0.1cm]
e + \frac{c^2\rho}{\rho}-\frac{1}{2}\|\spacevec{v}\|^2 = e + c^2 -\frac{1}{2}\|\spacevec{v}\|^2, & \mathrm{isothermal}
\end{cases}.
\end{equation}
Taking the jump of the variable \eqref{eq:entVarNumber1} we obtain
\begin{equation}\label{eq:entVarJump1}
\jump{w_1} = \begin{cases}
\gamma \jump{e} - \avg{v_1}\jump{v_1} - \avg{v_2}\jump{v_2} - \avg{v_3}\jump{v_3}, & \mathrm{polytropic}\\[0.1cm]
\jump{e} - \avg{v_1}\jump{v_1} - \avg{v_2}\jump{v_2} - \avg{v_3}\jump{v_3}, & \mathrm{isothermal}
\end{cases}
\end{equation}
because the values of $\gamma$ and $c$ are constant. Note that in the isothermal case $\gamma = 1$, so the jump of $w_1$ \eqref{eq:entVarJump1} has the same form regardless of the equation of state. Therefore, the total jump in the entropy variables is
\begin{equation}\label{eq:jumpEntVars}
\jump{\statevec{w}}^T = \begin{bmatrix}
 \jump{e + \frac{p}{\rho} - \frac{1}{2}\|\spacevec{v}\|^2}\\[0.1cm]
 \jump{v_1}\\[0.1cm]
 \jump{v_2}\\[0.1cm]
 \jump{v_3}\\[0.1cm]
\end{bmatrix}
 = \begin{bmatrix}
 \gamma\jump{e} - \avg{v_1}\jump{v_1} - \avg{v_2}\jump{v_2} - \avg{v_3}\jump{v_3}\\[0.1cm]
 \jump{v_1}\\[0.1cm]
 \jump{v_2}\\[0.1cm]
 \jump{v_3}\\[0.1cm]
\end{bmatrix}.
\end{equation}
We combine the expanded condition \eqref{eq:cond33}, the jump in the entropy variables \eqref{eq:jumpEntVars}, and rearrange terms to find
\begin{equation}\label{eq:cond5}
\begin{aligned}
f_1^*\left(\gamma\jump{e}-\avg{v_1}\jump{v_1}-\avg{v_2}\jump{v_2}-\avg{v_3}\jump{v_3}\right)&+f_2^*\jump{v_1}+f_3^*\jump{v_2}+f_4^*\jump{v_3}\\[0.1cm]
&= \avg{v_1}\jump{p}+\avg{p}\jump{v_1}.
\end{aligned}
\end{equation}

To determine the first flux component we find
\begin{equation}\label{eq:flux1}
f_1^*\gamma\jump{e} = \avg{v_1}\jump{p}\qquad \Rightarrow\qquad f_1^* = \frac{1}{\gamma}\frac{\jump{p}}{\jump{e}}\avg{v_1} = \gamavg\avg{v_1},
\end{equation}
from the $\gamma$-mean in Def. \ref{def:gamAvg}. The expanded flux condition \eqref{eq:cond5} is rewritten into linear jump components. We gather the like terms of each jump component to facilitate the construction of the remaining flux components:
\begin{equation}\label{eq:Jump1}
\begin{aligned}
\jump{v_1}:\quad -f_1^*\avg{v_1} + f_2^* &= \avg{p},\\[0.05cm]
\jump{v_2}:\quad -f_1^*\avg{v_2} + f_3^* &= 0,\\[0.05cm]
\jump{v_3}:\quad -f_1^*\avg{v_3} + f_4^* &= 0.
\end{aligned}
\end{equation}
Now, it is straightforward to solve the expressions in \eqref{eq:Jump1} and find
\begin{equation}\label{eq:solve1}
\begin{aligned}
f_2^* &= \gamavg\avg{v_1}^2 + \avg{p},\\[0.05cm]
f_3^* &= \gamavg\avg{v_1}\avg{v_2},\\[0.05cm]
f_4^* &= \gamavg\avg{v_1}\avg{v_3}.
\end{aligned}
\end{equation}
If we assume the left and right states are identical in \eqref{eq:flux1} and \eqref{eq:solve1} it is straightforward to verify that the numerical flux is consistent from its form and the properties of the $\gamma$-mean.\qed
\end{proof}
\begin{remark}
There are two values of $\gamma$ that change the form of the entropy conservative flux \eqref{eq:ECflux}:
\begin{enumerate}
\item Isothermal case ($\gamma=1$): The numerical flux becomes
\begin{equation*}
\statevec{f}^{*,\ec}_{\mathrm{iso}} = \begin{bmatrix}
\rho^{\ln}\avg{v_1}\\[0.1cm]
\rho^{\ln}\avg{v_1}^2 + \avg{p}\\[0.1cm]
\rho^{\ln}\avg{v_1}\avg{v_2}\\[0.1cm]
\rho^{\ln}\avg{v_1}\avg{v_3}\\[0.1cm]
\end{bmatrix},
\end{equation*}
where the fluid density is computed with the logarithmic mean \eqref{eq:logMean} just as in the adiabatic case \cite{Chandrashekar2012,IsmailRoe2009}.
\item Shallow water case ($\gamma=2$): Here the numerical flux simplifies to become
\begin{equation*}
\statevec{f}^{*,\ec}_{\mathrm{sw}} = \begin{bmatrix}
\avg{\rho}\avg{v_1}\\[0.1cm]
\avg{\rho}\avg{v_1}^2 + \avg{p}\\[0.1cm]
\avg{\rho}\avg{v_1}\avg{v_2}\\[0.1cm]
\end{bmatrix} 
= 
\begin{bmatrix}
\avg{\rho}\avg{v_1}\\[0.1cm]
\avg{\rho}\avg{v_1}^2 + \kappa\avg{\rho^2}\\[0.1cm]
\avg{\rho}\avg{v_1}\avg{v_2}\\[0.1cm]
\end{bmatrix},
\end{equation*}
where the velocity component in the $z$ direction is ignored due to the assumptions of the shallow water equations \cite{whitham1974}. If we let the fluid density be denoted as the water height, $\rho\mapsto h$, and take $\kappa=g/2$ where $g$ is the gravitational constant, then we recover the entropy conservative numerical flux function originally developed for the shallow water equations by Fjordholm et al. \cite{Fjordholm2011}.
\end{enumerate}
\end{remark}
\begin{remark}[Multi-dimensional entropy conservative fluxes]
The derivation of entropy conservative numerical fluxes in the other spatial directions is very similar to that shown in Thm. \ref{thm:EC}. So, we present the three-dimensional entropy conservative fluxes in the $y$ and $z$ directions in Appendix \ref{app:3DFlux}.
\end{remark}

\subsection{Entropy stable numerical flux}\label{sec:ESFlux}

As previously mentioned, the solution of hyperbolic conservation laws can contain or develop discontinuities regardless of the smoothness of the initial conditions \cite{evans2010}. In this case, a numerical approximation that is entropy conservative is no longer physical and should account for the dissipation of entropy near discontinuities. Such a numerical method is deemed entropy stable, e.g., \cite{fisher2013_2,Tadmor1987}. To create an entropy stable numerical flux function we begin with a general form
\begin{equation}\label{eq:ESFlux}
\statevec{f}^{*,\es} = \statevec{f}^{*,\ec} - \frac{1}{2}\nineMatrix{D}\jump{\statevec{u}},
\end{equation}
where $\nineMatrix{D}$ is a symmetric positive definite dissipation matrix. An immediate issue arises when we contract the entropy stable flux \eqref{eq:ESFlux} into entropy space. We must guarantee that the dissipation term possesses the correct sign \cite{Winters2016}; however, contracting \eqref{eq:ESFlux} with the jump in entropy variables gives
\begin{equation}\label{eq:entStable1}
\begin{aligned}
\jump{\statevec{w}}^T\statevec{f}^{*,\es} &= \jump{\statevec{w}}^T\statevec{f}^{*,\ec} - \frac{1}{2}\jump{\statevec{w}}^T\nineMatrix{D}\jump{\statevec{u}}\\[0.1cm]
&=-\jump{f^{s}} -\frac{1}{2}\jump{\statevec{w}}^T\nineMatrix{D}\jump{\statevec{u}}.
\end{aligned}
\end{equation}
So, there is a mixture of entropy and conservative variable jumps in the dissipation term that must be guaranteed positive to ensure that entropy is dissipated correctly. In general, it is unclear how to guarantee positivity of the dissipation term in \eqref{eq:entStable1} as required for entropy stability \cite{Barth1999}. To remedy this issue we rewrite $\jump{\statevec{u}}$ in terms of $\jump{\statevec{w}}$. This is possible due to the one-to-one variable mapping between conservative and entropy space as we know that
\begin{equation*}
\pderivative{\statevec{u}}{x} = \nineMatrix{H}\pderivative{\statevec{w}}{x}.
\end{equation*}

For the discrete case we wish to recover a particular average evaluation of the entropy Jacobian \eqref{eq:entJac} at a cell interface such that
\begin{equation}\label{eq:Hrelation}
\jump{\statevec{u}} \stackrel[]{!}{=} \hat{\nineMatrix{H}}\jump{\statevec{w}}.
\end{equation}
To generate a discrete entropy Jacobian that satisfies \eqref{eq:Hrelation} we need a specially designed average for the square of the sound speed.
\begin{definition}[Average square sound speed]
A special average for the sound speed squared is
\begin{equation}\label{eq:soundSpeedAvg}
\aAvg = \frac{\jump{p}}{\jump{\rho}}.
\end{equation}
\end{definition}
\begin{remark}
The average \eqref{eq:soundSpeedAvg} is consistent. To demonstrate this, consider the polytropic equation of state from \eqref{eq:eos} and take $\rho_R = \rho + \epsilon$ and $\rho_L=\rho$ so that 
\begin{equation*}
\aAvg = \frac{\jump{p}}{\jump{\rho}} = \frac{\frac{1}{\epsilon}\kappa\left((\rho+\epsilon)^{\gamma} - \rho^{\gamma}\right)}{\frac{1}{\epsilon}\left((\rho+\epsilon) - \rho\right)} \stackrel[]{\epsilon\rightarrow 0}{=} \gamma \kappa\rho^{\gamma-1} = \frac{\gamma \kappa\rho^{\gamma}}{\rho} = \frac{\gamma p}{\rho} = a^2.
\end{equation*}
\end{remark}
\begin{remark}\label{rem:soundSpeedAvg}
Again examining the special values of $\gamma$ we find:
\begin{enumerate}
\item Isothermal ($\gamma=1$) gives
\begin{equation*}
\aAvg = \frac{\jump{p}}{\jump{\rho}} = \frac{\jump{c^2\rho}}{\jump{\rho}} = c^2,
\end{equation*}
as $c^2$ is a constant.
\item Shallow water ($\gamma=2$) yields
\begin{equation*}
\aAvg = \frac{\jump{p}}{\jump{\rho}} = \frac{\jump{\kappa\rho^2}}{\jump{\rho}} = \frac{g}{2}\frac{2\avg{\rho}\jump{\rho}}{\jump{\rho}} = g\!\avg{\rho},
\end{equation*}
where, again, we take $\kappa=g/2$ and apply a property of the jump operator \eqref{eq:jumpProps}. Denoting the fluid density as the water height, $\rho\mapsto h$, we recover an average of the wave celerity for the shallow water model \cite{Fjordholm2011}.
\end{enumerate}
\end{remark}
\begin{remark}
Just as with the $\gamma$-mean, the sound speed average \eqref{eq:soundSpeedAvg} exhibits numerical stability issues for the polytropic case $\gamma>1$ when the fluid density values are close. Therefore, we present a numerically stable procedure to evaluate \eqref{eq:soundSpeedAvg} in Appendix \ref{app:aAvgEval}.
\end{remark}
\begin{lemma}[Discrete entropy Jacobian evaluation]
If the entropy Jacobian is evaluated with the average states
\begin{equation}\label{eq:discreteHMatrix}
\hat{\nineMatrix{H}} = \frac{1}{\aAvg}\begin{bmatrix}
\gamavg & \gamavg\avg{v_1} & \gamavg\!\avg{v_2} & \gamavg\!\avg{v_3}\\[0.1cm]
\gamavg\!\avg{v_1} & \gamavg\!\avg{v_1}^2 + \aAvg\avg{\rho} & \gamavg\!\avg{v_1}\avg{v_2} & \gamavg\!\avg{v_1}\avg{v_3}\\[0.1cm]
\gamavg\!\avg{v_2} & \gamavg\!\avg{v_1}\avg{v_2} & \gamavg\!\avg{v_2}^2 +  \aAvg\avg{\rho} & \gamavg\!\avg{v_2}\avg{v_3}\\[0.1cm]
\gamavg\!\avg{v_3} & \gamavg\!\avg{v_1}\avg{v_3} & \gamavg\!\avg{v_2}\avg{v_3} & \gamavg\!\avg{v_3}^2 +  \aAvg\avg{\rho}\\[0.1cm]
\end{bmatrix}
\end{equation}
then it is possible to relate the jump in conservative variables in terms of the jump in entropy variables by
\begin{equation}\label{eq:matrixConditionLemma}
\jump{\statevec{u}} = \hat{\nineMatrix{H}}\jump{\statevec{w}}.
\end{equation}
\end{lemma}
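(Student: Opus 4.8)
The plan is to verify the identity \eqref{eq:matrixConditionLemma} componentwise by direct computation, exploiting the fact that both $\jump{\statevec{u}}$ and $\jump{\statevec{w}}$ are explicit and that the jumps can be reduced to a small set of "atomic" jumps, namely $\jump{\rho}$, $\jump{v_1}$, $\jump{v_2}$, $\jump{v_3}$. First I would record the four conservative-variable jumps: $\jump{\statevec{u}} = (\jump{\rho},\ \jump{\rho v_1},\ \jump{\rho v_2},\ \jump{\rho v_3})^T$, and expand each momentum jump with the product rule \eqref{eq:jumpProps}, e.g. $\jump{\rho v_i} = \avg{\rho}\jump{v_i} + \avg{v_i}\jump{\rho}$. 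Next I would take the entropy-variable jumps from \eqref{eq:jumpEntVars}: the first component is $\gamma\jump{e} - \sum_i \avg{v_i}\jump{v_i}$ and the remaining three are just $\jump{v_i}$. The key observation that ties the two sides together is that $\gamma\jump{e}$ can be rewritten using the $\gamma$-mean of Def.~\ref{def:gamAvg}, since $\gamavg = \frac{1}{\gamma}\frac{\jump{p}}{\jump{e}}$ gives $\gamma\jump{e} = \frac{\jump{p}}{\gamavg}$, and then $\jump{p}$ is eliminated in favour of $\jump{\rho}$ through the average square sound speed, $\jump{p} = \aAvg\jump{\rho}$. Hence $\gamma\jump{e} = \frac{\aAvg}{\gamavg}\jump{\rho}$, which is precisely the combination that will make the $\frac{1}{\aAvg}$ prefactor and the $\gamavg$ entries of $\hat{\nineMatrix{H}}$ fall into place.

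With these substitutions in hand, I would compute $\hat{\nineMatrix{H}}\jump{\statevec{w}}$ row by row. For the first row, $\frac{1}{\aAvg}\big(\gamavg\cdot(\gamma\jump{e} - \sum_i\avg{v_i}\jump{v_i}) + \sum_i \gamavg\avg{v_i}\jump{v_i}\big)$ collapses: the $\sum_i \gamavg\avg{v_i}\jump{v_i}$ terms cancel against $-\gamavg\sum_i\avg{v_i}\jump{v_i}$, leaving $\frac{\gamavg}{\aAvg}\cdot\gamma\jump{e} = \frac{\gamavg}{\aAvg}\cdot\frac{\aAvg}{\gamavg}\jump{\rho} = \jump{\rho}$, as required. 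For the $(1+i)$-th row I would proceed similarly: the off-diagonal terms involving $\gamma\jump{e}$ contribute $\frac{1}{\aAvg}\gamavg\avg{v_i}\cdot\frac{\aAvg}{\gamavg}\jump{\rho} = \avg{v_i}\jump{\rho}$, the cross-velocity terms $\gamavg\avg{v_i}\avg{v_j}\jump{v_j}$ again cancel against the $-\gamavg\avg{v_i}\sum_j\avg{v_j}\jump{v_j}$ coming from the first entropy component, and the diagonal entry contributes $\frac{1}{\aAvg}\cdot\aAvg\avg{\rho}\jump{v_i} = \avg{\rho}\jump{v_i}$. Summing gives $\avg{v_i}\jump{\rho} + \avg{\rho}\jump{v_i} = \jump{\rho v_i}$, matching the momentum jump.

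The only genuine subtlety — and the step I expect to be the main obstacle to a clean write-up — is the treatment of the first entropy variable, because its jump \eqref{eq:jumpEntVars} was derived using the equation-of-state-specific simplification $w_1 = \gamma e - \frac12\|\spacevec v\|^2$ (polytropic) or $e + c^2 - \frac12\|\spacevec v\|^2$ (isothermal), and one must be careful that the combination $\gamma\jump{e}$ truly equals $\frac{\aAvg}{\gamavg}\jump{\rho}$ in both cases. For the polytropic case this is immediate from \eqref{eq:gam1} and the definition of $\aAvg$; for the isothermal case one uses \eqref{eq:logMean}, $\gamavg = \rho^{\ln}$, together with $\jump{e} = \jump{c^2\ln\rho} = c^2\jump{\ln\rho}$ and $\aAvg = c^2$, so that $\gamma\jump{e} = \jump{e} = c^2\jump{\ln\rho} = \frac{c^2}{\rho^{\ln}}\jump{\rho} = \frac{\aAvg}{\gamavg}\jump{\rho}$, again as needed. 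Once this unifying identity is isolated at the outset, the remaining verification is the routine (but lengthy) componentwise bookkeeping sketched above, and I would relegate the full expansion to the proof body or an appendix. I would close by noting that the symmetry and positive definiteness of $\hat{\nineMatrix{H}}$ follow from its manifest Cholesky-type structure together with the positivity of $\gamavg$ and $\aAvg$ guaranteed by the remarks following Def.~\ref{def:gamAvg}.
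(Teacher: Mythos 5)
Your proposal is correct and follows essentially the same route as the paper: both arguments hinge on the identity $\gamma\jump{e}=\jump{p}/\gamavg=\bigl(\aAvg/\gamavg\bigr)\jump{\rho}$ coming from Definition~\ref{def:gamAvg} and the sound-speed average, combined with the jump product rule $\jump{\rho v_i}=\avg{\rho}\jump{v_i}+\avg{v_i}\jump{\rho}$, worked row by row. The only difference is direction — the paper derives the entries of $\hat{\nineMatrix{H}}$ from the condition \eqref{eq:Hrelation} while you verify the stated matrix by direct multiplication — which is a purely presentational distinction.
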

\begin{proof}
We demonstrate how to obtain the first row of the discrete matrix $\hat{\nineMatrix{H}}$. From the condition \eqref{eq:Hrelation} we see that
\begin{equation}\label{eq:HFirstRow}
\jump{\rho} \stackrel[]{!}{=} \hat{\nineMatrix{H}}_{11}\left(\gamma\jump{e}-\avg{v_1}\jump{v_1}-\avg{v_2}\jump{v_2}-\avg{v_3}\jump{v_3}\right) + \hat{\nineMatrix{H}}_{12}\jump{v_1}+ \hat{\nineMatrix{H}}_{13}\jump{v_2}+ \hat{\nineMatrix{H}}_{14}\jump{v_3}.
\end{equation}
To determine the first entry of the matrix we apply the definition of the $\gamma$-mean \eqref{eq:gammaAvg} and the sound speed average \eqref{eq:soundSpeedAvg} to obtain
\begin{equation*}\label{eq:H11}
\hat{\nineMatrix{H}}_{11} = \frac{1}{\gamma}\frac{\jump{\rho}}{\jump{e}} = \frac{1}{\gamma}\frac{\jump{\rho}}{\frac{1}{\gamma}\frac{\jump{p}}{\gamavg}} = \gamavg\frac{\jump{\rho}}{\jump{p}} = \frac{\gamavg}{\aAvg}.
\end{equation*}
The remaining components in the first row of $\hat{\nineMatrix{H}}$ from \eqref{eq:HFirstRow} are
\begin{equation*}
\hat{\nineMatrix{H}}_{12} = \frac{\gamavg\avg{v_1}}{\aAvg},\quad \hat{\nineMatrix{H}}_{13} = \frac{\gamavg\avg{v_2}}{\aAvg},\quad\hat{\nineMatrix{H}}_{14} = \frac{\gamavg\avg{v_3}}{\aAvg}.
\end{equation*}
Repeating this process we obtain the remaining unknown components in the relation \eqref{eq:Hrelation} and arrive at the discrete entropy Jacobian \eqref{eq:discreteHMatrix}.\qed
\end{proof}

Next, we select the dissipation matrix $\nineMatrix{D}$ to be a discrete evaluation of the eigendecomposition of the flux Jacobian \eqref{eq:fluxJac}
\begin{equation}\label{eq:dissMat1}
\nineMatrix{D} = \hat{\nineMatrix{R}}|\hat{\nineMatrix{\Lambda}}|\hat{\nineMatrix{R}}^{-1},
\end{equation}
where $\nineMatrix{R}$ is the matrix of right eigenvectors \eqref{eq:rightEvecs} and $\nineMatrix{\Lambda}$ is a diagonal matrix containing the eigenvalues \eqref{eq:evals}. From the discrete entropy Jacobian \eqref{eq:discreteHMatrix}, we seek a right eigenvector and diagonal scaling matrix $\hat{\nineMatrix{Z}}$ such that a discrete version of the eigenvector scaling \eqref{eq:evEntScaling}
\begin{equation}\label{eq:matrixCondition}
\hat{\nineMatrix{H}} \stackrel[]{!}{=} \hat{\nineMatrix{R}}\hat{\nineMatrix{Z}}\hat{\nineMatrix{R}}^T,
\end{equation}
holds \textit{whenever possible}.
\begin{lemma}[Discrete eigenvector and scaling matrices]\label{lem:RZ}
If we evaluate the right eigenvector and diagonal scaling matrices as
\begin{equation}\label{eq:discreteRandZ}
\hat{\nineMatrix{R}} = \begin{bmatrix}
1 & 0 & 0 & 1\\[0.1cm]
\avg{v_1} - \sqrt{\aAvg} & 0 & 0 & \avg{v_1} + \sqrt{\aAvg}\\[0.1cm]
\avg{v_2} & 1 & 0 & \avg{v_2}\\[0.1cm]
\avg{v_3} & 0 & 1 & \avg{v_3}\\[0.1cm]
\end{bmatrix},\quad
\hat{\nineMatrix{Z}} = \mathrm{diag}\left(\frac{\gamavg}{2\aAvg}\,,\,\gamavg\,,\,\gamavg\,,\,\frac{\gamavg}{2\aAvg}\right),
\end{equation}
then we obtain the relation
\begin{equation}\label{eq:simeqLemma}
\hat{\nineMatrix{H}}\simeq \hat{\nineMatrix{R}}\hat{\nineMatrix{Z}}\hat{\nineMatrix{R}}^T,
\end{equation}
where equality holds everywhere except for the second, third, and fourth diagonal entries. 
\end{lemma}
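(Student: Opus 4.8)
The plan is to avoid a head‑on $4\times4$ multiplication by exploiting the fact that the continuous factorization \eqref{eq:evEntScaling}, $\nineMatrix{H} = \nineMatrix{R}\nineMatrix{Z}\nineMatrix{R}^T$, is an algebraic identity between matrices whose entries are rational functions of the density $\rho$, the velocities $v_i$, and $a^2$ (the square roots appearing in \eqref{eq:rightEvecs} cancel in the product, since the first and fourth columns enter symmetrically and only $(v_1-a)^2+(v_1+a)^2$ and $(v_1-a)+(v_1+a)$ survive). Such an identity is preserved under \emph{any} substitution of these variables. Comparing \eqref{eq:discreteRandZ} with \eqref{eq:rightEvecs} and with the definition of $\nineMatrix{Z}$, one sees at once that $\hat{\nineMatrix{R}}$ and $\hat{\nineMatrix{Z}}$ are exactly $\nineMatrix{R}$ and $\nineMatrix{Z}$ under the uniform replacement
\[
\rho \;\longmapsto\; \gamavg, \qquad v_i \;\longmapsto\; \avg{v_i}, \qquad a^2 \;\longmapsto\; \aAvg
\]
(with $a\mapsto\sqrt{\aAvg}$ the positive root). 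Applying the same substitution to \eqref{eq:evEntScaling} then gives $\hat{\nineMatrix{R}}\hat{\nineMatrix{Z}}\hat{\nineMatrix{R}}^T = \nineMatrix{H}\big|_{\rho\mapsto\gamavg,\,v_i\mapsto\avg{v_i},\,a^2\mapsto\aAvg}$, i.e. precisely the matrix \eqref{eq:entJac} with $\rho$, $v_i$, $a^2$ replaced by their discrete counterparts in \emph{every} slot.

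Next I would compare this substituted matrix with the prescribed discrete entropy Jacobian $\hat{\nineMatrix{H}}$ of \eqref{eq:discreteHMatrix}, entry by entry. In \eqref{eq:entJac} the density $\rho$ occurs only through $\rho$, $\rho v_i$, $\rho v_i v_j$, and, on the second, third, and fourth diagonal entries, the extra term $a^2\rho$. Inspection of \eqref{eq:discreteHMatrix} shows it reproduces all of these with $\gamavg$, $\gamavg\avg{v_i}$, $\gamavg\avg{v_i}\avg{v_j}$ \emph{except} that the extra diagonal term reads $\aAvg\avg{\rho}$ instead of $\aAvg\gamavg$ — which is exactly the form forced on $\hat{\nineMatrix{H}}$ by the jump relation \eqref{eq:matrixConditionLemma} of the preceding lemma. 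Hence $\hat{\nineMatrix{R}}\hat{\nineMatrix{Z}}\hat{\nineMatrix{R}}^T$ and $\hat{\nineMatrix{H}}$ agree in every entry off the $(2,2)$, $(3,3)$, $(4,4)$ positions and differ there by $\tfrac{1}{\aAvg}\bigl(\aAvg\gamavg-\aAvg\avg{\rho}\bigr)=\gamavg-\avg{\rho}$, which is \eqref{eq:simeqLemma} with exactly the stated exceptional set. As a concrete cross‑check one may instead expand $\hat{\nineMatrix{R}}\hat{\nineMatrix{Z}}\hat{\nineMatrix{R}}^T=\sum_k\hat{Z}_{kk}\hat{\statevec{r}}_k\hat{\statevec{r}}_k^T$: writing the first and fourth columns as $\statevec{m}\mp\sqrt{\aAvg}\,\statevec{e}_2$ with $\statevec{m}=(1,\avg{v_1},\avg{v_2},\avg{v_3})^T$, the $\sqrt{\aAvg}$ cross terms cancel and one obtains the closed form $\tfrac{\gamavg}{\aAvg}\statevec{m}\statevec{m}^T+\gamavg\,\mathrm{diag}(0,1,1,1)$, to be set against $\hat{\nineMatrix{H}}=\tfrac{\gamavg}{\aAvg}\statevec{m}\statevec{m}^T+\avg{\rho}\,\mathrm{diag}(0,1,1,1)$.

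I do not expect a genuine obstacle here: the content is a short verification. The only point that needs care — and the reason the factorization is $\simeq$ rather than $=$ — is that the discrete construction is pulled in two directions at once. The matrices $\hat{\nineMatrix{R}},\hat{\nineMatrix{Z}}$ want every occurrence of $\rho$ discretized as the $\gamma$‑mean $\gamavg$, so that the clean identity \eqref{eq:evEntScaling} transfers verbatim; the requirement \eqref{eq:matrixConditionLemma}, $\jump{\statevec{u}}=\hat{\nineMatrix{H}}\jump{\statevec{w}}$, instead pins the momentum block of $\hat{\nineMatrix{H}}$ to the arithmetic mean $\avg{\rho}$. These two averages coincide only for $\gamma=2$ (the shallow water case, by Def.~\ref{def:gamAvg}), which is precisely why the second, third, and fourth diagonal entries are the sole place where the discrete scaling \eqref{eq:matrixCondition} cannot be satisfied.
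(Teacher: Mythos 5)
Your argument is correct, and it reaches the conclusion by a genuinely different route than the paper. The paper proceeds constructively: it matches the entries of $\hat{\nineMatrix{H}}$ against those of $\hat{\nineMatrix{R}}\hat{\nineMatrix{Z}}\hat{\nineMatrix{R}}^T$ one at a time to \emph{determine} the sixteen unknown averages (two representative computations are shown, the $(1,1)$ entry fixing $\hat{\nineMatrix{Z}}_{11}=\hat{\nineMatrix{Z}}_{44}$ and the $(2,2)$ entry forcing $\hat{v}_1=\avg{v_1}$, $\hat{a}=\sqrt{\aAvg}$ and exposing the $\gamavg$ versus $\avg{\rho}$ mismatch), with the remaining entries handled by symbolic algebra in Maxima. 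You instead observe that $\hat{\nineMatrix{R}}$ and $\hat{\nineMatrix{Z}}$ are exactly the continuous $\nineMatrix{R}$ and $\nineMatrix{Z}$ under the uniform substitution $\rho\mapsto\gamavg$, $v_i\mapsto\avg{v_i}$, $a^2\mapsto\aAvg$, so the continuous factorization transfers verbatim, and the rank-one expansion $\hat{\nineMatrix{R}}\hat{\nineMatrix{Z}}\hat{\nineMatrix{R}}^T=\tfrac{\gamavg}{\aAvg}\statevec{m}\statevec{m}^T+\gamavg\,\mathrm{diag}(0,1,1,1)$ versus $\hat{\nineMatrix{H}}=\tfrac{\gamavg}{\aAvg}\statevec{m}\statevec{m}^T+\avg{\rho}\,\mathrm{diag}(0,1,1,1)$ isolates the discrepancy in one line (I checked the cross terms in $\sqrt{\aAvg}$ do cancel, and the residual on each of the three diagonal slots is indeed $\gamavg-\avg{\rho}$). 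What your approach buys is a complete, machine-free verification together with a structural explanation of \emph{why} equality must fail precisely there: $\hat{\nineMatrix{H}}$ is pinned to $\avg{\rho}$ on those entries by the jump relation \eqref{eq:matrixConditionLemma} while the factorization wants $\gamavg$ everywhere, and these agree only for $\gamma=2$. What it does not do — and what the paper's entry-by-entry derivation does — is establish that the averaging in \eqref{eq:discreteRandZ} is the \emph{unique} choice compatible with \eqref{eq:matrixCondition}; your proof verifies the stated matrices rather than deriving them, which is all the lemma as stated requires.
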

\begin{proof}
The procedure to determine the discrete evaluation of the matrices $\hat{\nineMatrix{R}}$ and $\hat{\nineMatrix{Z}}$ is similar to that taken by Winters et al. \cite{Winters2017}. We relate the individual entries of $\hat{\nineMatrix{H}}$ to those in $\hat{\nineMatrix{R}}\hat{\nineMatrix{Z}}\hat{\nineMatrix{R}}^T$ and determine the 16 individual components of the matrices. We explicitly demonstrate two computations to outline the general technique and qualify the average states inserted in the final form. 

We begin by computing the first entry of the first row of the system that should satisfy
\begin{equation*}
\hat{\nineMatrix{H}}_{11} = \frac{\gamavg}{\aAvg} \stackrel[]{!}{=} \frac{\hat{\rho}}{\hat{a}^2} = \left(\hat{\nineMatrix{R}}\hat{\nineMatrix{Z}}\hat{\nineMatrix{R}}^T\right)_{\!11}.
\end{equation*}
This leads to two entries of the diagonal scaling matrix
\begin{equation*}
\hat{\nineMatrix{Z}}_{11} = \hat{\nineMatrix{Z}}_{44} = \frac{\gamavg}{2\aAvg}.
\end{equation*}
The second computation is to determine the second entry of the second row of the system given by
\begin{equation*}
\hat{\nineMatrix{H}}_{22} = \frac{\gamavg\avg{v_1}^2}{\aAvg} + \avg{\rho} \stackrel[]{!}{=} \frac{\gamavg}{2\aAvg}\left(\left(\hat{v}_1+\hat{a}\right)^2 + \left(\hat{v}_1 - \hat{a}\right)^2\right) = \left(\hat{\nineMatrix{R}}\hat{\nineMatrix{Z}}\hat{\nineMatrix{R}}^T\right)_{\!22}.
\end{equation*}
It is clear that we must select $\hat{v}_1 = \avg{v_1}$ and $\hat{a} = \sqrt{\aAvg}$ in the second row of the right eigenvector matrix $\hat{\nineMatrix{R}}$. Unfortunately, just as in the ideal MHD case, we cannot enforce strict equality between the continuous and the discrete entropy scaling analysis \cite{Derigs2016_2,Winters2017} and find
\begin{equation*}
\hat{\nineMatrix{H}}_{22}\simeq\left(\hat{\nineMatrix{R}}\hat{\nineMatrix{Z}}\hat{\nineMatrix{R}}^T\right)_{\!22} = \frac{\gamavg\avg{v_1}^2}{\aAvg} + \gamavg.
\end{equation*}
We apply this same process to the remaining unknown portions from the condition \eqref{eq:matrixCondition} and, after many algebraic manipulations, determine a unique averaging procedure for the discrete eigenvector and scaling matrices \eqref{eq:discreteRandZ}. The derivations were aided and verified using the symbolic algebra software Maxima \cite{maxima}. \qed
\end{proof}
\begin{remark}
In a similar fashion from \cite{Winters2017} we determine the discrete diagonal matrix of eigenvalues
\begin{equation*}
\hat{\nineMatrix{\Lambda}} = \mathrm{diag}\left(\avg{v_1}-\sqrt{\aAvg}\,,\,\avg{v_1}\,,\,\avg{v_1}\,,\,\avg{v_1}+\sqrt{\aAvg}\right).
\end{equation*}
\end{remark}

Now, we have a complete discrete description of the entropy stable numerical flux function from \eqref{eq:ESFlux}
\begin{equation*}
\begin{aligned}
\statevec{f}^{*,\es} &= \statevec{f}^{*,\ec} - \frac{1}{2}\nineMatrix{D}\jump{\statevec{u}}\\[0.1cm]
&=\statevec{f}^{*,\ec} - \frac{1}{2}\hat{\nineMatrix{R}}|\hat{\nineMatrix{\Lambda}}|\hat{\nineMatrix{R}}^{-1}\hat{\nineMatrix{H}}\jump{\statevec{w}},\quad\text{from \eqref{eq:matrixConditionLemma} and \eqref{eq:dissMat1}}\\[0.1cm]
&\simeq\statevec{f}^{*,\ec} - \frac{1}{2}\hat{\nineMatrix{R}}|\hat{\nineMatrix{\Lambda}}|\hat{\nineMatrix{R}}^{-1}\hat{\nineMatrix{R}}\hat{\nineMatrix{Z}}\hat{\nineMatrix{R}}^T\jump{\statevec{w}},\quad\text{from \eqref{eq:simeqLemma}}\\[0.1cm]
&=\statevec{f}^{*,\ec} - \frac{1}{2}\hat{\nineMatrix{R}}|\hat{\nineMatrix{\Lambda}}|\hat{\nineMatrix{Z}}\hat{\nineMatrix{R}}^T\jump{\statevec{w}}.
\end{aligned}
\end{equation*}
This leads to the main result of this section.
\begin{theorem}[Entropy stable flux]\label{thm:ES}
If we select the numerical flux function
\begin{equation}\label{eq:ESFluxInThm}
\statevec{f}^{*,\es} = \statevec{f}^{*,\ec} - \frac{1}{2}\hat{\nineMatrix{R}}|\hat{\nineMatrix{\Lambda}}|\hat{\nineMatrix{Z}}\hat{\nineMatrix{R}}^T\!\jump{\statevec{w}},
\end{equation}
in the discrete entropy update \eqref{eq:discreteEntUpdate1} then the method is guaranteed to dissipate entropy with the correct sign.
\end{theorem}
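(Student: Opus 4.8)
The plan is to insert the proposed flux \eqref{eq:ESFluxInThm} into the two-point entropy balance \eqref{eq:discreteEntUpdate1} and reduce the claim to the sign of a single quadratic form in $\jump{\statevec{w}}$. First I would use that the baseline flux $\statevec{f}^{*,\ec}$ obeys Tadmor's entropy conservation condition \eqref{eq:cond1}, i.e.\ $\jump{\statevec{w}}^T\statevec{f}^{*,\ec}=\jump{\Psi}=\jump{\statevec{w}^T\statevec{f}}-\jump{f^s}$. Substituting \eqref{eq:ESFluxInThm} for $\statevec{f}^*$ in \eqref{eq:discreteEntUpdate1} and invoking this identity, the physical-flux terms cancel and the update collapses to
\begin{equation*}
\Delta x\pderivative{}{t}\left(s_L+s_R\right)=-\jump{f^s}-\frac{1}{2}\jump{\statevec{w}}^T\hat{\nineMatrix{R}}\,|\hat{\nineMatrix{\Lambda}}|\,\hat{\nineMatrix{Z}}\,\hat{\nineMatrix{R}}^T\jump{\statevec{w}}.
\end{equation*}
Hence it suffices to prove the dissipation term $\tfrac12\jump{\statevec{w}}^T\hat{\nineMatrix{R}}\,|\hat{\nineMatrix{\Lambda}}|\,\hat{\nineMatrix{Z}}\,\hat{\nineMatrix{R}}^T\jump{\statevec{w}}$ is nonnegative; then \eqref{eq:discreteEntUpdate1} acquires exactly this non-positive contribution on top of the entropy-conservative balance, which is the claimed entropy dissipation with the correct sign, and telescoping over the grid leaves the total discrete entropy non-increasing up to boundary terms.

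Next I would set $\statevec{q}\coloneqq\hat{\nineMatrix{R}}^T\jump{\statevec{w}}$, so the dissipation term equals $\tfrac12\statevec{q}^T|\hat{\nineMatrix{\Lambda}}|\hat{\nineMatrix{Z}}\statevec{q}$ with $|\hat{\nineMatrix{\Lambda}}|$ and $\hat{\nineMatrix{Z}}$ both diagonal, hence their product diagonal with entries proportional to $|\avg{v_1}\pm\sqrt{\aAvg}|\,\gamavg/\aAvg$ and $|\avg{v_1}|\,\gamavg$. Each entry is nonnegative: the absolute values trivially; $\gamavg>0$ because the $\gamma$-mean lies in $[\rho_L,\rho_R]$ with $\rho>0$ (Def.~\ref{def:gamAvg} and the ensuing remark); and $\aAvg>0$ because $\jump{p}$ and $\jump{\rho}$ share a sign for a strictly increasing pressure law and $\aAvg$ is consistent with $a^2=\gamma p/\rho>0$. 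Thus $|\hat{\nineMatrix{\Lambda}}|\hat{\nineMatrix{Z}}$ is positive semi-definite, $\statevec{q}^T|\hat{\nineMatrix{\Lambda}}|\hat{\nineMatrix{Z}}\statevec{q}\ge 0$, and the dissipation term is $\ge 0$ as needed; note that no invertibility of $\hat{\nineMatrix{R}}$ is required for this step. Consistency \eqref{eq:consistencyFV} is immediate as well, since $\jump{\statevec{w}}=\statevec{0}$ when the two states coincide forces the dissipation to vanish and $\statevec{f}^{*,\es}=\statevec{f}^{*,\ec}=\statevec{f}$.

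The one genuine subtlety — and the point I would stress — is that \eqref{eq:ESFluxInThm} was obtained from the \emph{approximate} relation $\hat{\nineMatrix{H}}\simeq\hat{\nineMatrix{R}}\hat{\nineMatrix{Z}}\hat{\nineMatrix{R}}^T$ of Lemma~\ref{lem:RZ}, not an exact one. This is harmless for entropy stability because the theorem adopts the right-hand side of \eqref{eq:ESFluxInThm} as the \emph{definition} of the dissipation operator, so the entropy contribution is exactly the quadratic form analysed above with no residual error term; the mismatch in the second through fourth diagonal entries of $\hat{\nineMatrix{H}}$ only means the realized dissipation deviates slightly from the idealized Roe-type form $\tfrac12\hat{\nineMatrix{R}}|\hat{\nineMatrix{\Lambda}}|\hat{\nineMatrix{R}}^{-1}\jump{\statevec{u}}$, which changes the magnitude of the added dissipation but never its sign. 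Had one instead demanded entropy stability of that idealized dissipation $\tfrac12\nineMatrix{D}\jump{\statevec{u}}$ directly, one would be forced to control the error term $\tfrac12\hat{\nineMatrix{R}}|\hat{\nineMatrix{\Lambda}}|\hat{\nineMatrix{R}}^{-1}\big(\hat{\nineMatrix{H}}-\hat{\nineMatrix{R}}\hat{\nineMatrix{Z}}\hat{\nineMatrix{R}}^T\big)\jump{\statevec{w}}$, which is precisely the obstacle the present formulation circumvents.
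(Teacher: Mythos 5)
Your proposal is correct and follows essentially the same route as the paper: insert the flux into the two-point entropy balance, invoke the entropy conservation condition \eqref{eq:cond1} to reduce the update to $-\jump{f^s}$ minus the quadratic form $\tfrac12\jump{\statevec{w}}^T\hat{\nineMatrix{R}}|\hat{\nineMatrix{\Lambda}}|\hat{\nineMatrix{Z}}\hat{\nineMatrix{R}}^T\jump{\statevec{w}}$, and conclude from the positive (semi-)definiteness of the diagonal matrices $|\hat{\nineMatrix{\Lambda}}|$ and $\hat{\nineMatrix{Z}}$. Your added remarks --- spelling out why $\gamavg$ and $\aAvg$ are positive, and observing that the theorem sidesteps the $\hat{\nineMatrix{H}}\simeq\hat{\nineMatrix{R}}\hat{\nineMatrix{Z}}\hat{\nineMatrix{R}}^T$ mismatch by taking the quadratic form itself as the definition of the dissipation --- are consistent with what the paper does implicitly and do not constitute a different argument.
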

\begin{proof}
To begin we restate the discrete evolution of the entropy at a single interface where we insert the newly derived entropy stable flux \eqref{eq:ESFluxInThm}
\begin{equation*}\label{eq:EScondInProof}
\Delta x\pderivative{}{t}\left(S_L+S_R\right) = \jump{\statevec{w}}^T\statevec{f}^{*,\es} - \jump{\statevec{w}^T\statevec{f}}.
\end{equation*}
From the construction of the entropy conservative flux function from Thm. \ref{thm:EC} we know that
\begin{equation*}
\jump{\statevec{w}}^T\statevec{f}^{*,\es} - \jump{\statevec{w}^T\statevec{f}} = \jump{\statevec{w}}^T\left(\statevec{f}^{*,\ec} - \frac{1}{2}\hat{\nineMatrix{R}}|\hat{\nineMatrix{\Lambda}}|\hat{\nineMatrix{Z}}\hat{\nineMatrix{R}}^T\jump{\statevec{w}}\right) - \jump{\statevec{w}^T\statevec{f}}=-\jump{f^{s}} - \frac{1}{2}\jump{\statevec{w}}^T\hat{\nineMatrix{R}}|\hat{\nineMatrix{\Lambda}}|\hat{\nineMatrix{Z}}\hat{\nineMatrix{R}}^T\jump{\statevec{w}}.
\end{equation*}
So, \eqref{eq:EScondInProof} becomes
\begin{equation*}
\Delta x\pderivative{}{t}\left(S_L+S_R\right) + \jump{f^{s}} = - \frac{1}{2}\jump{\statevec{w}}^T\hat{\nineMatrix{R}}|\hat{\nineMatrix{\Lambda}}|\hat{\nineMatrix{Z}}\hat{\nineMatrix{R}}^T\jump{\statevec{w}}\leq 0,
\end{equation*}
because the matrices $|\hat{\nineMatrix{\Lambda}}|$ and $\hat{\nineMatrix{Z}}$ are symmetric positive definite, the dissipation term is a quadratic form in entropy space and guarantees a negative contribution.
\qed
\end{proof}

\section{Extension of ES scheme to discontinuous Galerkin numerical approximation}\label{sec:DG}

In this section, we extend the entropy conservative/stable finite volume numerical fluxes to higher spatial order by building them into a nodal discontinuous Galerkin (DG) spectral element method. We provide an \textit{abbreviated} presentation of the entropy stable DG framework, but complete details can be found in \cite{Gassner:2016ye}. For simplicity we restrict the discussion to uniform Cartesian elements; however, the extension to curvilinear elements is straightforward \cite[Appendix B]{Gassner:2016ye}.

First, we subdivide the physical domain $\Omega$ into $N_{\mathrm{el}}$ non-overlapping Cartesian elements $\left\{E_\nu\right\}_{\nu=1}^{N_{\mathrm{el}}}$. Each element is then transformed with a linear mapping, $\spacevec{X}(\spacevec{\xi})$ into reference coordinates $\spacevec{\xi}=(\xi,\eta,\zeta)$ on the element $E_0=[-1,1]^3$ \cite{Gassner:2016ye}. As we restrict to Cartesian meshes, the Jacobian and metric terms are simply
\begin{equation*}
J = \frac{1}{8}\Delta x\Delta y\Delta z,\quad X_{\xi} = \frac{1}{2}\Delta x,\quad Y_{\eta} = \frac{1}{2}\Delta y,\quad Z_{\zeta} = \frac{1}{2}\Delta z,
\end{equation*}
with element side lengths $\Delta x$, $\Delta y$, and $\Delta z$.

For each element, we approximate the components of the state vector, the flux vectors, etc. with polynomials of degree $N$ in each spatial direction. The polynomial approximations are denoted with capital letters, e.g. $\statevec U \in\mathbb{P}^{N}\!(E)$. Here we consider the construction of a nodal discontinuous Galerkin spectral element method (DGSEM), in which the polynomials are written in terms of Lagrange basis functions, e.g. $\ell_i(\xi)$ with $i=0,\ldots,N$, that interpolate at the Lengendre-Gauss-Lobatto (LGL) nodes \cite{Kopriva:2009nx}.

The DG method is built from the weak form of the conservation law \eqref{eq:compactConsLaw} where we multiply by a test function $\varphi\in\mathbb{P}^N$ and integrate of the reference element
\begin{equation}\label{eq:intDG1}
\int\limits_{E_0}\left(J\statevec{U}_t + \spacevec{\nabla}_{\xi}\cdot\bigcontravec{F}\right)\varphi\,\mathrm{d}\spacevec{\xi} = \statevec{0},
\end{equation}
where derivatives are now taken in the reference coordinates and we introduce the contravariant fluxes
\begin{equation*}
\bigcontravec{F} = \begin{bmatrix}
Y_\eta Z_\zeta\statevec{F}_1\\[0.05cm]
X_\xi Z_\zeta\statevec{F}_2\\[0.05cm]
X_\xi Y_\eta\statevec{F}_3\\[0.05cm]
\end{bmatrix}.
\end{equation*}
Note, that there is no continuity of the approximate solution or $\varphi$ assumed between elements boundaries.

We select the test function to be the tensor product basis $\varphi=\ell_i(\xi)\ell_j(\eta)\ell_k(\zeta)$ for $i,j,k=0,\ldots,N$. The integrals in \eqref{eq:intDG1} are approximated with LGL quadrature and we \textit{collocate} the quadrature nodes with the interpolation nodes. This exploits that the Lagrange basis functions are discretely orthogonal and simplifies the nodal DG approximation \cite{Kopriva:2009nx}. The integral approximations introduce the mass matrix
\begin{equation*}
\matrixvec{M} = \mathrm{diag}(\omega_0,\ldots,\omega_N),
\end{equation*}
where $\{\omega_i\}_{i=0}^N$ are the LGL quadrature weights. In addition to the discrete integration matrix, the polynomial basis functions and the interpolation nodes form a discrete polynomial derivative matrix
\begin{equation}\label{eq:derivMat}
\matrixvec{D}_{ij} = \pderivative{\ell_j}{\xi}\bigg|_{\xi=\xi_i},\quad i,j = 0,\ldots,N,
\end{equation}
where $\{\xi_i\}_{i=0}^N$ are the LGL nodes. The derivative matrix \eqref{eq:derivMat} is special as it satisfies the summation-by-parts (SBP) property for all polynomial orders $N$ \cite{gassner_skew_burgers}
\begin{equation*}
\matrixvec{M}\matrixvec{D} + \left(\matrixvec{M}\matrixvec{D}\right)^T = \matrixvec{B} = \mathrm{diag}(-1,0,\ldots,0,1).
\end{equation*}
The SBP property is a discrete equivalent of integration-by-parts that is a crucial component to develop high-order entropy conservative/stable numerical approximations, e.g. \cite{fisher2013,fisher2013_2}

We apply the SBP property once to generate boundary contributions in the approximation of \eqref{eq:intDG1}. Just like in the finite volume method, we resolve the discontinuity across element boundaries with a numerical \textit{surface flux} function, e.g. $\statevec{F}_1^*(-1,\eta_j,\zeta_k;\hat{n})$ where $j,k=0,\ldots,N$ and $\hat{n}$ is the normal vector in reference space. We apply the SBP property again to move discrete derivatives back onto the fluxes inside the volume. Further, if we introduce a two-point numerical \textit{volume flux}, e.g. $\statevec{F}_1^{\#}\left(\statevec{U}_{ijk},\statevec{U}_{mjk}\right)$, that is consistent \eqref{eq:consistencyFV} and symmetric with respect to its arguments, e.g. \cite{Gassner:2016ye}. These steps produce a semi-discrete split form DG approximation
\begin{equation}\label{eq:finalDG}
\resizebox{\textwidth}{!}{$
\begin{aligned}
J\left(\statevec{U}_t\right)_{ijk} &+ \frac{1}{\omega_{N}}\left[\contrastatevec{F}^{*}_1(1,\eta_j,\zeta_k;\hat{n}) - \left(\contrastatevec{F}_1\right)_{Njk}\right] - \frac{1}{\omega_{0}}\left[\contrastatevec{F}_1^{*}(-1,\eta_j,\zeta_k;\hat{n}) - \left(\contrastatevec{F}_1\right)_{0jk}\right]+2\!\sum\limits_{m=0}^N \matrixvec{D}_{im}\contrastatevec{F}_1^{\#}(U_{ijk},U_{mjk})\\
&+\frac{1}{\omega_{N}}\Big[\contrastatevec{F}^{*}_2(\xi_i,1,\zeta_k;\hat{n}) - \left(\contrastatevec{F}_2\right)_{iNk}\Big] - \frac{1}{\omega_{0}}\Big[\contrastatevec{F}_2^{*}(\xi_i,-1,\zeta_k;\hat{n}) - \left(\contrastatevec{F}_2\right)_{i0k}\Big]+2\!\sum\limits_{m=0}^N \matrixvec{D}_{im}\contrastatevec{F}_2^{\#}(U_{ijk},U_{imk})\\
&+ \frac{1}{\omega_{N}}\left[\contrastatevec{F}^{*}_3(\xi_i,\eta_j,1;\hat{n}) - \left(\contrastatevec{F}_3\right)_{ijN}\right] - \frac{1}{\omega_{0}}\left[\contrastatevec{F}_3^{*}(\xi_i,\eta_j,-1;\hat{n}) - \left(\contrastatevec{F}_3\right)_{ij0}\right]+2\!\sum\limits_{m=0}^N \matrixvec{D}_{im}\contrastatevec{F}_3^{\#}(U_{ijk},U_{ijm})=\statevec{0},
\end{aligned}
$}
\end{equation}
for $i,j,k=0,\ldots,N$.

To create an entropy aware high-order DG approximation we select the numerical surface and volume numerical fluxes to be those from the finite volume context \cite{carpenter_esdg,Gassner:2016ye}.

Two variants of the split form DG scheme \eqref{eq:finalDG} are of interest for the polytropic Euler equations:
\begin{enumerate}
\item \textit{Entropy conservative DG} approximation: Select $\statevec{F}^{\#}$ \textit{\textbf{and}} $\statevec{F}^{*}$ to be the entropy conserving fluxes developed in Sect.~\ref{sec:ECFlux}.
\item \textit{Entropy stable DG} approximation: Take $\statevec{F}^{\#}$ to be the entropy conserving fluxes from Sect.~\ref{sec:ECFlux} and $\statevec{F}^{*}$ to be the entropy stable fluxes from Sect.~\ref{sec:ESFlux}.
\end{enumerate}

\section{Numerical results}\label{sec:numRes}

We present numerical tests to validate the theoretical findings of the previous sections for an entropy conservative/stable DG spectral element approximation. To do so, we perform the numerical tests in the two dimensional domain $\Omega = [0,1]^2$. We subdivide the domain into $K$ non-overlapping, uniform Cartesian elements such that the DG approximation takes the form presented in Sect.~\ref{sec:DG}. The semi-discrete scheme \eqref{eq:finalDG} is integrated in time with the explicit five-state, fourth order low storage Runge-Kutta method of Carpenter and Kennedy \cite{Carpenter&Kennedy:1994}. A stable time step is computed according to an adjustable coefficient $CFL\in(0,1]$, the local maximum wave speed, and the relative grid size, e.g. \cite{gassner2011}. For uniform Cartesian meshes the explicit time step is selected by
\begin{equation*}
\Delta t \coloneqq CFL\frac{\Delta x}{\lambda_{\mathrm{max}}(2N+1)}.
\end{equation*} 

First, we will verify the high-order spatial accuracy for the DG scheme with the method of manufactured solutions. For this we assume the solution to polytropic Euler equations takes the form
\begin{equation}\label{eq:manuSol}
\mathbf{u} = \left[h,\,\frac{1}{2}h,\,\frac{3}{2}h\right]^T\quad\mathrm{with}\quad h(x,y,t) = 8 + \cos(2\pi x)\sin(2\pi y)\cos(2\pi t). 
\end{equation}
This introduces an additional residual term on the right hand side of \eqref{eq:compactConsLaw} that reads
\begin{equation}\label{eq:manuResidual}
\mathbf{r} = \begin{bmatrix}
h_t + \frac{1}{2}h_x + \frac{3}{2}h_y\\[0.2cm]
\frac{1}{2}h_t +  \frac{1}{4}h_x + b\rho_x +  \frac{3}{4}h_y\\[0.2cm]
 \frac{1}{2}h_t +  \frac{3}{4}h_x +  \frac{9}{4}h_y + b\rho_y 
\end{bmatrix},
\qquad b =  \begin{cases}
\kappa\gamma \rho^{\gamma-1}, & \mathrm{polytropic}\\[0.1cm]
c^2, & \mathrm{isothermal}
\end{cases}.
\end{equation}
Note that the residual term \eqref{eq:manuResidual} is $\gamma$ dependent.

The second test will demonstrate the entropic properties of the DG approximation. To do so, we use a discontinuous initial condition
\begin{equation}\label{eq:discIni}
\mathbf{u} = \begin{cases}
[1.2,\,0.1,\,0.0]^T, &  x \leq y\\[0.1cm]
[1.0,\, 0.2,\, -0.4]^T, & x>y
\end{cases}.
\end{equation}
To measure the discrete entropy conservation of the DG approximation we examine the entropy residual of the numerical scheme \cite{friedrich2018}. To compute the discrete entropy growth, \eqref{eq:finalDG} is rewritten to be
\begin{equation}\label{eq:DGWithRes}
J\left(\statevec{U}_t\right)_{ij} + \mathbf{Res}\left(\statevec{U}\right)_{ij} = \statevec{0},
\end{equation}
where $i,j = 0,\ldots,N$ and
\begin{equation}\label{eq:DGresidual}
\begin{aligned}
\mathbf{Res}\left(\statevec{U}\right)_{ij} &= \frac{1}{\omega_{N}}\left[\contrastatevec{F}^{*}_1(1,\eta_j;\hat{n}) - \left(\contrastatevec{F}_1\right)_{Nj}\right] - \frac{1}{\omega_{0}}\left[\contrastatevec{F}_1^{*}(-1,\eta_j;\hat{n}) - \left(\contrastatevec{F}_1\right)_{0j}\right]+2\!\sum\limits_{m=0}^N \matrixvec{D}_{im}\contrastatevec{F}_1^{\#}(U_{ij},U_{mj})\\
&+ \frac{1}{\omega_{N}}\Big[\contrastatevec{F}^{*}_2(\xi_i,1;\hat{n}) - \left(\contrastatevec{F}_2\right)_{iN}\Big] - \frac{1}{\omega_{0}}\Big[\contrastatevec{F}_2^{*}(\xi_i,-1;\hat{n}) - \left(\contrastatevec{F}_2\right)_{i0}\Big]+2\!\sum\limits_{m=0}^N \matrixvec{D}_{im}\contrastatevec{F}_2^{\#}(U_{ij},U_{im}).
\end{aligned}
\end{equation}
The growth in discrete entropy is computed by contracting \eqref{eq:DGWithRes} with the entropy variables \eqref{eq:entVars}
\begin{equation*}
J\mathbf{W}_{ij}^T\left(\statevec{U}_t\right)_{ij} = -\statevec{W}_{ij}^T\mathbf{Res}\left(\statevec{U}\right)_{ij} \quad\Leftrightarrow\quad J\left(S_t\right)_{ij} = -\statevec{W}_{ij}^T\mathbf{Res}\left(\statevec{U}\right)_{ij},
\end{equation*}
where we apply the definition of the entropy variables to obtain the temporal derivative, $(S_t)_{ij}$, at each LGL node. The DG approximation is entropy conservative when the two-point finite volume flux from Theorem \ref{thm:EC} is taken to be the interface flux, i.e. $\contrastatevec{F}^{\#}=\contrastatevec{F}^{*}=\contrastatevec{F}^{\ec}$, in \eqref{eq:DGresidual}. This means that
\begin{equation*}
\sum_{\nu=1}^{N_{\mathrm{el}}} J_k\sum_{i=0}^N\sum_{j=0}^N\omega_i\omega_j\left(S_t\right)_{ij} = 0,
\end{equation*}
should hold for all time. We numerically verify this property by computing the integrated residual over the domain $\Omega$
\begin{equation}\label{eq:integratedResid}
IS_t = -\sum_{\nu=1}^{N_{\mathrm{el}}} J_k\sum_{i=0}^N\sum_{j=0}^N\omega_i\omega_j\mathbf{W}_{ij}^T\mathbf{Res}\left(\statevec{U}\right)_{ij},
\end{equation}
and demonstrate that \eqref{eq:integratedResid} is on the order of machine precision for the discontinuous initial condition \eqref{eq:discIni}. If interface dissipation is included, like that described in Sect.~\ref{sec:ESFlux}, the DG approximation is entropy stable and \eqref{eq:integratedResid} becomes
\begin{equation*}
IS_t \leq 0.
\end{equation*}

We consider two particular values of $\gamma$ in these numerical studies: Sect.~\ref{sec:isothermalNumRes} presents results for the isothermal Euler equations where $\gamma=1$ and Sect.~\ref{sec:polytropicNumRes} contains results for the polytropic Euler equations where $\gamma=1.4$. We forgo presenting entropy conservative/stable DG numerical results for the shallow water variant ($\gamma = 2$) as they can be found elsewhere in the literature, e.g. \cite{wintermeyer2017}.

\subsection{Isothermal flow}\label{sec:isothermalNumRes}

Here we take $\gamma=1$ and the speed of sound to be $c=1$.

\subsubsection{Convergence}\label{sec:ConvIsoTest}

We use the manufactured solution \eqref{eq:manuSol} and additional residual \eqref{eq:manuResidual} to investigate the accuracy of the DG approximation for two polynomial orders $N=3$ and $N=4$. Further, we examine the convergence rates of the entropy conservative DGSEM where $\mathbf{F}^{\#}=\mathbf{F}^{*}=\mathbf{F}^{\ec}$ (the flux from Theorem \ref{thm:EC}) as well as the entropy stable DGSEM where $\mathbf{F}^{\#}=\mathbf{F}^{\ec}$ (the flux from Theorem \ref{thm:EC}) and $\mathbf{F}^{*} = \mathbf{F}^{\es}$ (the flux from Theorem \ref{thm:ES}). We run the solution up to a final time of $T=1.0$ We compute the $L^2$ error in the density between the approximation and the manufactured solution of different mesh resolution for each polynomial order. In Table \ref{tab:isoEC} we present the experimental order of convergence (EOC) for the entropy conservative DGSEM. We observe an odd/even effect, that is an EOC of $N$ for odd polynomial orders and $N+1$ for even polynomial orders, which has been previously observed, e.g. \cite{gassner_skew_burgers,hindenlang2019order}. This is particularly noticeable for higher resolution numerical tests. Table \ref{tab:isoES} gives the EOC results for the entropy stable DGSEM where there is no longer an odd/even effect and the convergence rate is $N+1$, as expected for a nodal DG scheme, e.g. \cite{canuto2006,hindenlang2019order}.
	\begin{table}[!ht]
	\centering
	\caption{Experimental order of convergence (EOC) for the \textit{entropy conservative} DG approximation of the isothermal Euler with $c=1$. The $L^2$ error in the density is computed against the manufactured solution \eqref{eq:manuSol} for different mesh resolutions. We run to a final time of $T=1.0$ with $CFL=1$. The convergence results for $N=3$ are given on the left and $N=4$ on the right.}
	\label{tab:isoEC}
	\subfloat[$N=3$]
	{
	\begin{tabular}{@{}lll@{}}
		\toprule\noalign{\smallskip}
		$N_{\mathrm{el}}$ & $L^2$ Error of $\rho$ & EOC  \\
		\noalign{\smallskip}\midrule\noalign{\smallskip}
		4 & 9.8E-02 & ---\\[0.05cm]
		8 & 1.7E-03 & 5.8  \\[0.05cm]
		16 & 1.7E-04 & 3.4 \\[0.05cm]
		32 & 3.4E-05 & 2.3 \\[0.05cm]
		64 & 4.7E-06 & 2.9 \\[0.05cm]
		128 & 6.1E-07 & 3.0\\
		\noalign{\smallskip}\bottomrule
	\end{tabular}
	}
	\qquad\qquad\qquad
	\subfloat[$N=4$]
	{
	\begin{tabular}{@{}lll@{}}
		\toprule\noalign{\smallskip}
		$N_{\mathrm{el}}$ & $L^2$ Error of $\rho$ & EOC  \\
		\noalign{\smallskip}\midrule\noalign{\smallskip}
		4 & 5.0E-03 & ---\\[0.05cm]
		8 & 1.9E-04 & 4.7  \\[0.05cm]
		16 & 2.5E-06 & 6.2 \\[0.05cm]
		32 & 6.0E-08 & 5.4 \\[0.05cm]
		64 & 1.9E-09 & 5.0 \\[0.05cm]
		128 & 8.6E-11 & 4.4\\
		\noalign{\smallskip}\bottomrule
	\end{tabular}
	}
	\end{table}
	\begin{table}[!ht]
	\centering
	\caption{Experimental order of convergence (EOC) for the \textit{entropy stable} DG approximation of the isothermal Euler with $c=1$. The $L^2$ error in the density is computed against the manufactured solution \eqref{eq:manuSol} for different mesh resolutions. We run to a final time of $T=1.0$ with $CFL=1$. The convergence results for $N=3$ are given on the left and $N=4$ on the right.}
	\label{tab:isoES}
	\subfloat[$N=3$]
	{
	\begin{tabular}{@{}lll@{}}
		\toprule\noalign{\smallskip}
		$N_{\mathrm{el}}$ & $L^2$ Error of $\rho$ & EOC  \\
		\noalign{\smallskip}\midrule\noalign{\smallskip}
		4 & 1.3E-02 & ---\\[0.05cm]
		8 & 1.4E-03 & 3.2 \\[0.05cm]
		16 & 1.0E-04 & 3.8 \\[0.05cm]
		32 & 9.5E-06 & 3.4 \\[0.05cm]
		64 & 5.9E-07 & 4.0 \\[0.05cm]
                 128 & 3.6E-08 & 4.0\\
		\noalign{\smallskip}\bottomrule
	\end{tabular}
	}
	\qquad\qquad\qquad
	\subfloat[$N=4$]
	{
	\begin{tabular}{@{}lll@{}}
		\toprule\noalign{\smallskip}
		$N_{\mathrm{el}}$ & $L^2$ Error of $\rho$ & EOC  \\
		\noalign{\smallskip}\midrule\noalign{\smallskip}
		4 & 1.1E-03 & --- \\[0.05cm]
		8 & 6.4E-05 & 4.2  \\[0.05cm]
		16 & 2.2E-06 & 4.9 \\[0.05cm]
		32 & 6.6E-08 & 5.0 \\[0.05cm]
		64 & 2.2E-09 & 4.9 \\[0.05cm]
		128 & 8.6E-11 & 4.7\\
		\noalign{\smallskip}\bottomrule
	\end{tabular}
	}
	\end{table}

\subsubsection{Entropy conservation test}\label{sec:ECisoTest}

For this test we compute the entropy residual \eqref{eq:DGresidual} for two polynomial orders, $N=3$ and $N=4$, and different mesh resolutions. We select the volume and surface fluxes to be $\mathbf{F}^{\#}=\mathbf{F}^{*}=\mathbf{F}^{\ec}$ (the flux from Theorem \ref{thm:EC}). We see in Table \ref{tab:ECiso} that the magnitude of the entropy residual is on the order of machine precision for the discontinuous initial condition \eqref{eq:discIni} for all resolution configurations.
	\begin{table}[!ht]
	\centering
	\caption{Entropy conservation test for the DG approximation of the isothermal Euler with $c=1$. The entropy residual \eqref{eq:integratedResid} is computed for two polynomial orders and different mesh resolutions.}
	\label{tab:ECiso}
	\begin{tabular}{@{}cll@{}}
		\toprule\noalign{\smallskip}
		$N_{\mathrm{el}}$ & $L^2$ Error of $IS_t,\,(N=3)$ & $L^2$ Error of $IS_t,\,(N=4)$\\
		\noalign{\smallskip}\midrule\noalign{\smallskip}
			2 & 8.3E-16  & 4.5E-15 \\[0.05cm]
			4 & 2.1E-15  & 2.1E-14 \\[0.05cm]
			8 & 1.5E-14  & 6.5E-14 \\[0.05cm]
			16 & 7.2E-14 & 2.4E-13 \\[0.05cm]
			32 & 3.2E-13 & 9.1E-13 \\[0.05cm]
			64 & 1.4E-12 & 3.5E-12 \\
		\noalign{\smallskip}\bottomrule
	\end{tabular}
	\end{table}
	
\subsection{Polytropic flow}\label{sec:polytropicNumRes}

Here we take $\gamma=1.4$ and the scaling factor to be $\kappa=0.5$.

\subsubsection{Convergence}

The formulation of the convergence test is very similar to those discussed in Sect.~\ref{sec:ConvIsoTest} where we use the manufactured solution \eqref{eq:manuSol} and additional residual \eqref{eq:manuResidual} to investigate the accuracy of the DG approximation for two polynomial orders $N=3$ and $N=4$. We run the solution up to a final time of $T=1.0$ and compute the $L^2$ error in the density. Table \ref{tab:polyECConv} gives the EOC for the entropy conservative DGSEM where we observe an odd/even effect with respect to the polynomial order of the approximation. The entropy stable EOC results in Table \ref{tab:polyESConv}, again, show optimal convergence order of $N+1$ and no such odd/even effect.
	\begin{table}[!ht]
	\centering
	\caption{Experimental order of convergence (EOC) for the \textit{entropy conservative} DG approximation of the polytropic Euler with $\kappa=0.5$. The $L^2$ error in the density is computed against the manufactured solution \eqref{eq:manuSol} for different mesh resolutions. We run to a final time of $T=1.0$ with $CFL=1$. The convergence results for $N=3$ are given on the left and $N=4$ on the right.}
	\label{tab:polyECConv}
	\subfloat[$N=3$]
	{
	\begin{tabular}{@{}lll@{}}
		\toprule\noalign{\smallskip}
		$N_{\mathrm{el}}$ & $L^2$ Error of $\rho$ & EOC  \\
		\noalign{\smallskip}\midrule\noalign{\smallskip}
			4 & 4.7E-02 & ---  \\[0.05cm]
			8 & 7.1E-03 & 2.7  \\[0.05cm]
			16 & 3.2E-04 & 4.5 \\[0.05cm]
			32 & 1.3E-05 & 4.6 \\[0.05cm]
    			64 & 1.6E-06 & 3.0 \\[0.05cm]
			128 & 2.0E-07 & 2.9\\
		\noalign{\smallskip}\bottomrule
	\end{tabular}
	}
	\qquad\qquad\qquad
	\subfloat[$N=4$]
	{
	\begin{tabular}{@{}lll@{}}
		\toprule\noalign{\smallskip}
		$N_{\mathrm{el}}$ & $L^2$ Error of $\rho$ & EOC  \\
		\noalign{\smallskip}\midrule\noalign{\smallskip}
			4 & 1.5E-02 & ---\\[0.05cm]
			8 & 1.5E-04 & 6.7  \\[0.05cm]
			16 & 4.1E-06 & 5.2 \\[0.05cm]
			32 & 7.2E-08 & 5.8 \\[0.05cm]
			64 & 2.3E-09 & 5.0 \\[0.05cm]
			128 & 8.7E-11 & 4.7\\
		\noalign{\smallskip}\bottomrule
	\end{tabular}
	}
	\end{table}
	\begin{table}[!ht]
	\centering
	\caption{Experimental order of convergence (EOC) for the \textit{entropy stable} DG approximation of the polytropic Euler with $\kappa=0.5$. The $L^2$ error in the density is computed against the manufactured solution \eqref{eq:manuSol} for different mesh resolutions. We run to a final time of $T=1.0$ with $CFL=1$. The convergence results for $N=3$ are given on the left and $N=4$ on the right.}
	\label{tab:polyESConv}
	\subfloat[$N=3$]
	{
	\begin{tabular}{@{}lll@{}}
		\toprule\noalign{\smallskip}
		$N_{\mathrm{el}}$ & $L^2$ Error of $\rho$ & EOC  \\
		\noalign{\smallskip}\midrule\noalign{\smallskip}
			4 & 1.6E-02 & --- \\[0.05cm]
			8 & 1.7E-03 & 3.3  \\[0.05cm]
			16 & 1.5E-04 & 3.5 \\[0.05cm]
			32 & 9.4E-06 & 4.0 \\[0.05cm]
			64 & 6.3E-07 & 3.9 \\[0.05cm]
			128 & 3.9E-08 & 4.0\\
		\noalign{\smallskip}\bottomrule
	\end{tabular}
	}
	\qquad\qquad\qquad
	\subfloat[$N=4$]
	{
	\begin{tabular}{@{}lll@{}}
		\toprule\noalign{\smallskip}
		$N_{\mathrm{el}}$ & $L^2$ Error of $\rho$ & EOC  \\
		\noalign{\smallskip}\midrule\noalign{\smallskip}
			4 & 1.4E-03 & ---  \\[0.05cm]
			8 & 6.2E-05 & 4.5  \\[0.05cm]
			16 & 2.6E-06 & 4.5 \\[0.05cm]
			32 & 7.5E-08 & 5.1 \\[0.05cm]
			64 & 2.5E-09 & 4.9 \\[0.05cm]
			128 & 9.4E-11 & 4.7\\
		\noalign{\smallskip}\bottomrule
	\end{tabular}
	}
	\end{table}

\subsubsection{Entropy conservation test}

Just as in Sect.~\ref{sec:ECisoTest} we compute the entropy residual \eqref{eq:DGresidual} for two polynomial orders, $N=3$ and $N=4$, and different mesh resolutions. The volume and surface fluxes are both taken to be the entropy conservative flux from Theorem \ref{thm:EC}. Table \ref{tab:ECpoly} shows that the entropy residual for the polytropic test is on the order of machine precision for the discontinuous initial condition \eqref{eq:discIni} and all resolution configurations.
	\begin{table}[!ht]
	\centering
	\caption{Entropy conservation test for the DG approximation of the polytropic Euler with $\kappa=0.5$. The entropy residual \eqref{eq:integratedResid} is computed for two polynomial orders and different mesh resolutions.}
	\label{tab:ECpoly}
	\begin{tabular}{@{}cll@{}}
		\toprule\noalign{\smallskip}
		$N_{\mathrm{el}}$ & $L^2$ Error of $IS_t,\,(N=3)$ & $L^2$ Error of $IS_t,\,(N=4)$\\
		\noalign{\smallskip}\midrule\noalign{\smallskip}
			2 & 7.4E-16 & 1.7E-15 \\[0.05cm]
			4 & 1.5E-15 & 9.4E-15 \\[0.05cm]
			8 & 4.7E-15 & 2.8E-14 \\[0.05cm]
			16 & 1.7E-14 & 8.4E-14 \\[0.05cm]
			32 & 6.2E-14 & 3.1E-13 \\[0.05cm]
			64 & 2.4E-13 &1.2E-12 \\
		\noalign{\smallskip}\bottomrule
	\end{tabular}
	\end{table}

\section{Conclusions}

In this work we developed entropy conservative (and entropy stable) numerical approximations for the Euler equations with an equation of state that models a polytropic gas. For this case the pressure is determined from a scaled $\gamma$-power law of the fluid density. In turn, the total energy conservation equation became redundant and it was removed from the polytropic Euler system. In fact, the total energy acted as a mathematical entropy function for the polytropic Euler equations where its conservation (or decay) became an \textit{auxiliary} condition not explicitly modeled by the PDEs. 

We analyzed the continuous entropic properties of the polytropic Euler equations. This provided guidance for the semi-discrete entropy analysis. Next, we derived entropy conservative numerical flux functions in the finite volume context that required the introduction of a special $\gamma$-mean, which is a generalization of the logarithmic mean present in the adiabatic Euler case. Dissipation matrices were then designed and incorporated to guarantee the finite volume fluxes obeyed the entropy inequality discretely. We also investigated two special cases of the polytropic system that can be used to model isothermal gases ($\gamma=1$) or the shallow water equations ($\gamma=2$). The finite volume scheme was extended to high-order spatial accuracy through a specific discontinuous Galerkin spectral element framework. We then validated the theoretical analysis with several numerical results. In particular, we demonstrated the high-order spatial accuracy and entropy conservative/stable properties of the novel numerical fluxes for the polytropic Euler equations.

\section*{Acknowledgements}
Gregor Gassner and Moritz Schily have been supported by the European Research Council (ERC) under the European Union's Eights Framework Program Horizon 2020 with the research project \textit{Extreme}, ERC grant agreement no. 714487.

\bibliographystyle{plain}
\bibliography{references}
\appendix

\section{Numerically stable procedures to evaluate special averages}

In the derivation of the entropy conservative flux function and the discrete evaluations of the dissipation matrices needed to create an entropy stable numerical flux we encountered two special average states, one for the fluid density \eqref{eq:gammaAvg} and one for the square of the sound speed \eqref{eq:soundSpeedAvg}. From their construction, both averages can exhibit numerical instabilities when the left and right state values approach one another because the averages tend to a $0/0$ form. Therefore, we provide numerically stable evaluations of these averages when the left and right states are deemed ``close.''

\subsection{$\gamavg$ procedure}\label{app:gamAvgEval}

We start from the definition of the $\gamma$-mean necessary for the polytropic Euler entropy conservative fluxes. It is defined as
\begin{equation}\label{eq:gammaAvgApp}
\gamavg = \frac{1}{\gamma}\frac{\jump{p}}{\jump{e}},
\end{equation}
where $p$ is the pressure and $e$ is the internal energy. For the isothermal case ($\gamma=1$) and the shallow water case ($\gamma=2$) the $\gamma$-mean is well defined as discussed in Def. \ref{def:gamAvg}. So, we only investigate the $\gamma>1$, $\gamma\ne 2$ case. From \eqref{eq:eos} and \eqref{eq:innerEnergy} we know that the $\gamma$-mean takes the form
\begin{equation*}
\gamavg = \frac{\gamma-1}{\gamma}\frac{\jump{\rho^\gamma}}{\jump{\rho^{\gamma-1}}}.
\end{equation*}
To develop a numerically stable approach when $\rho_R\approx\rho_L$ we first rewrite the left and right states to be
\begin{equation*}
\jump{\rho^\gamma} = \rho_R^\gamma - \rho_L^\gamma
= \left(\avg{\rho} + \frac{1}{2}\jump{\rho}\right)^{\!\!\gamma} - \left(\avg{\rho} - \frac{1}{2}\jump{\rho}\right)^{\!\!\gamma}
= \avg{\rho}^{\!\gamma}\left[\left(1 + \frac{\jump{\rho}}{2\avg{\rho}}\right)^{\!\!\gamma} - \left(1 - \frac{\jump{\rho}}{2\avg{\rho}}\right)^{\!\!\gamma}\right].
\end{equation*}
To simplify the discussion we introduce an auxiliary variable
\begin{equation}\label{eq:auxVarF}
f = \frac{\jump{\rho}}{2\avg{\rho}} = \frac{\rho_R-\rho_L}{\rho_R+\rho_L},
\end{equation}
such that
\begin{equation*}
\jump{\rho^\gamma} = \avg{\rho}^{\!\gamma}\left[(1+f)^{\gamma} - (1-f)^{\gamma}\right],
\end{equation*}
and
\begin{equation*}
\jump{\rho^{\gamma-1}} = \avg{\rho}^{\!\gamma-1}\left[(1+f)^{\gamma-1} - (1-f)^{\gamma-1}\right].
\end{equation*}
Then, we rewrite the $\gamma$-mean in the form
\begin{equation*}
\avg{\rho}_{\!\gamma} = \frac{\gamma-1}{\gamma}\frac{\avg{\rho}^{\!\gamma}}{\avg{\rho}^{\!\gamma-1}}\frac{(1+f)^{\gamma} - (1-f)^{\gamma}}{(1+f)^{\gamma-1} - (1-f)^{\gamma-1}}
= \avg{\rho}\left[\frac{\left(\frac{(1+f)^{\gamma} - (1-f)^{\gamma}}{\gamma}\right)}{\left(\frac{(1+f)^{\gamma-1} - (1-f)^{\gamma-1}}{\gamma-1}\right)}\right]
\end{equation*}
Next, we perform several Taylor expansions (essentially applying the Binomial Theorem) to find
\begin{equation}\label{eq:Taylor1}
\resizebox{0.9\textwidth}{!}{$
\begin{aligned}
(1+f)^\gamma &= 1+\gamma f + \frac{\gamma(\gamma-1)}{2!}f^2+\frac{\gamma(\gamma-1)(\gamma-2)}{3!}f^3+\frac{\gamma(\gamma-1)(\gamma-2)(\gamma-3)}{4!}f^4 + \mathcal{O}(f^5),\\[0.2cm]
(1-f)^\gamma &= 1-\gamma f + \frac{\gamma(\gamma-1)}{2!}f^2-\frac{\gamma(\gamma-1)(\gamma-2)}{3!}f^3+\frac{\gamma(\gamma-1)(\gamma-2)(\gamma-3)}{4!}f^4 + \mathcal{O}(f^5),\\[0.2cm]
(1+f)^{\!\gamma-1} &= 1+(\gamma-1) f + \frac{(\gamma-1)(\gamma-2)}{2!}f^2+\frac{(\gamma-1)(\gamma-2)(\gamma-3)}{3!}f^3+\frac{(\gamma-1)(\gamma-2)(\gamma-3)(\gamma-4)}{4!}f^4 + \mathcal{O}(f^5),\\[0.2cm]
(1+f)^{\!\gamma-1} &= 1-(\gamma-1) f + \frac{(\gamma-1)(\gamma-2)}{2!}f^2-\frac{(\gamma-1)(\gamma-2)(\gamma-3)}{3!}f^3+\frac{(\gamma-1)(\gamma-2)(\gamma-3)(\gamma-4)}{4!}f^4 + \mathcal{O}(f^5).
\end{aligned}$}
\end{equation}
Then we compute
\begin{equation*}
\frac{(1+f)^{\gamma} - (1-f)^{\gamma}}{\gamma} = 2f + \frac{2(\gamma-1)(\gamma-2)}{3!}f^3 + \frac{2(\gamma-1)(\gamma-2)(\gamma-3)(\gamma-4)}{5!}f^5 + \mathcal{O}(f^7),
\end{equation*}
and
\begin{equation*}
\frac{(1+f)^{\gamma-1} - (1-f)^{\gamma-1}}{\gamma-1} = 2f + \frac{2(\gamma-2)(\gamma-3)}{3!}f^3 + \frac{2(\gamma-2)(\gamma-3)(\gamma-4)(\gamma-5)}{5!}f^5 + \mathcal{O}(f^7).
\end{equation*}
Now, we use polynomial long division to find
\begin{equation*}
\frac{\left(\frac{(1+f)^{\gamma} - (1-f)^{\gamma}}{\gamma}\right)}{\left(\frac{(1+f)^{\gamma-1} - (1-f)^{\gamma-1}}{\gamma-1}\right)} = 1 + \frac{\gamma-2}{3}f^2 - \frac{(\gamma+1)(\gamma-2)(\gamma-3)}{45}f^4 + \frac{(\gamma+1)(\gamma-2)(\gamma-3)(2\gamma(\gamma-2)-9)}{945}f^6 + \mathcal{O}(f^8)
\end{equation*}
So, we have successfully rewritten the $\gamma$-mean to take a more numerically stable form
\begin{equation}\label{eq:gammaMeanFinal}
\resizebox{0.9\textwidth}{!}{$
\gamavg = \avg{\rho}\left(1 + \frac{\gamma-2}{3}f^2 - \frac{(\gamma+1)(\gamma-2)(\gamma-3)}{45}f^4 + \frac{(\gamma+1)(\gamma-2)(\gamma-3)(2\gamma(\gamma-2)-9)}{945}f^6 + \mathcal{O}(f^8)\right)
$}
\end{equation}

Interestingly, we see that the $\gamma=2$ case remains correct as well because $(\gamma-2)$ is a common factor of the higher order terms and the expression becomes the arithmetic mean. For the isothermal case we set $\gamma=1$ in \eqref{eq:gammaMeanFinal} and obtain
\begin{equation}\label{eq:proposedLogMean}
\gamavg\!\bigg|_{\gamma=1} =  \avg{\rho}\left(1 - \frac{1}{3}f^2 - \frac{4}{45}f^4 - \frac{44}{945}f^6 + \mathcal{O}(f^8)\right).
\end{equation}
The expansion \eqref{eq:proposedLogMean} is a numerically stable evaluation of the \textit{logarithmic mean} that is equivalent to the expansion derived by Ismail and Roe \cite{IsmailRoe2009}. We see that there is no longer any issue for the isothermal limit $\gamma\rightarrow 1$ because the expansion and subsequent division removed any $0/0$ tendencies.
In fact, the expansion \eqref{eq:gammaMeanFinal} is valid for any value of $\gamma$ provided the left and right states are close in value.

We outline the algorithm to compute the $\gamma$-mean:
\begin{equation*}
\setlength{\belowdisplayskip}{0pt}
f = \frac{\rho_R-\rho_L}{\rho_R+\rho_L},\quad \nu = f^2
\end{equation*}
\begin{itemize}
\item[] \textbf{if} $(\nu<10^{-4})$ \textbf{then}
    	\begin{equation*}
    	\setlength{\abovedisplayskip}{0pt}
    	\setlength{\belowdisplayskip}{0pt}
	\gamavg = \avg{\rho}\left(1 + \nu\left(\frac{\gamma-2}{3} - \nu\left(\frac{(\gamma+1)(\gamma-2)(\gamma-3)}{45} + \nu\frac{(\gamma+1)(\gamma-2)(\gamma-3)(2\gamma(\gamma-2)-9)}{945}\right)\right)\right)
	\end{equation*}
\item[] \textbf{else}\newline {\color{white}{aaal}}Use formula \eqref{eq:gammaAvgApp}
\end{itemize}

\subsection{$\aAvg$ procedure}\label{app:aAvgEval}

For the polytropic Euler equations we are able to find a unique averaging procedure for the dissipation matrices, but require a specific average of the sound speed
\begin{equation}\label{eq:soundSpeedApp}
\overline{a^2} = \frac{\jump{p}}{\jump{\rho}} = \kappa\frac{\jump{\rho^\gamma}}{\jump{\rho}}.
\end{equation}
Again, we consider $\gamma>1$, $\gamma\ne 2$ as those cases are numerically stable as discussed in Rem. \ref{rem:soundSpeedAvg}. When the left and right density states are close we require a numerically stable way to evaluate this average. Just as in Appendix \ref{app:gamAvgEval} we rewrite the jumps and perform Taylor expansions
\begin{equation*}
\begin{aligned}
\jump{\rho^\gamma} &= \avg{\rho}^\gamma\left[(1+f)^\gamma-(1-f)^\gamma\right],\\[0.05cm]
\jump{\rho} &= \rho_R-\rho_L = \avg{\rho} + \frac{1}{2}\jump{\rho} - \left(\avg{\rho}-\frac{1}{2}\jump{\rho}\right) = 2\avg{\rho}f,
\end{aligned}
\end{equation*}
where $f$ is defined as in \eqref{eq:auxVarF}. Therefore, the average of the sound speed becomes
\begin{equation*}
\overline{a^2} = \kappa\frac{\jump{\rho^\gamma}}{\jump{\rho}} = \kappa\frac{ \avg{\rho}^\gamma\left[(1+f)^\gamma-(1-f)^\gamma\right]}{2\avg{\rho}f}=\kappa\avg{\rho}^{\gamma-1}\frac{(1+f)^\gamma-(1-f)^\gamma}{2f}.
\end{equation*}
We apply the first two Taylor expansions from \eqref{eq:Taylor1} and find
\begin{equation*}
\begin{aligned}
\overline{a^2} = \gamma\kappa\avg{\rho}^{\gamma-1}&\left(1 + \frac{(\gamma-1)(\gamma-2)}{6}f^2 + \frac{(\gamma-1)(\gamma-2)(\gamma-3)(\gamma-4)}{120}f^4\right.\\
&\qquad\left.+\frac{(\gamma-1)(\gamma-2)(\gamma-3)(\gamma-4)(\gamma-5)(\gamma-6)}{5040}f^6+\mathcal{O}(f^8)\right).
\end{aligned}
\end{equation*}
The square sound speed average is computed with the same logic as the $\gamma$-mean from Appendix \ref{app:gamAvgEval}:
\begin{equation*}
\setlength{\belowdisplayskip}{0pt}
f = \frac{\rho_R-\rho_L}{\rho_R+\rho_L},\quad \nu = f^2
\end{equation*}
\begin{itemize}
\item[] \textbf{if} $(\nu<10^{-4})$ \textbf{then}
\begin{equation*}
\setlength{\abovedisplayskip}{1.5pt}
\setlength{\belowdisplayskip}{0pt}
\resizebox{0.85\textwidth}{!}{$
\overline{a^2} = \gamma\kappa\avg{\rho}^{\gamma-1}\left(1 + \nu\left(\frac{(\gamma-1)(\gamma-2)}{6} + \nu\left(\frac{(\gamma-1)(\gamma-2)(\gamma-3)(\gamma-4)}{120}+\nu\frac{(\gamma-1)(\gamma-2)(\gamma-3)(\gamma-4)(\gamma-5)(\gamma-6)}{5040}\right)\right)\right)$}
\end{equation*}
\item[] \textbf{else}\newline {\color{white}{aaaaal}}Use formula \eqref{eq:soundSpeedApp}
\end{itemize}

\section{Multi-dimensional EC/ES fluxes}\label{app:3DFlux}

Here we collect the entropy conservative numerical flux functions in the $y$ and $z$ directions. The proof of entropy conservation is very similar to the proof of Thm. \ref{thm:EC}, although the discrete entropy conservation condition changes for the two other spatial directions to be 
\begin{equation*}
\jump{\statevec{w}}^T\statevec{f}_2^* = \jump{\Psi_2},\quad\mathrm{and}\quad\jump{\statevec{w}}^T\statevec{f}_3^* = \jump{\Psi_3},
\end{equation*}
respectively. The entropy flux potentials are
\begin{equation*}
\Psi_2 = pv_2,\qquad \Psi_2 = pv_3.
\end{equation*}
\begin{corollary}[Entropy conservative $y$ and $z$ fluxes]\label{cor:ECyz}
The entropy conservative numerical flux functions in the $y$ and $z$ directions are
\begin{equation*}
\statevec{f}_2^{*,\ec} = \begin{pmatrix}
\gamavg\avg{v_2}\\[0.1cm]
\gamavg\avg{v_1}\avg{v_2} \\[0.1cm]
\gamavg\avg{v_2}^2 + \avg{p} \\[0.1cm]
\gamavg\avg{v_2}\avg{v_3}\\[0.1cm]
\end{pmatrix},
\qquad
\statevec{f}_3^{*,\ec} = \begin{pmatrix}
\gamavg\avg{v_3}\\[0.1cm]
\gamavg\avg{v_1}\avg{v_3}\\[0.1cm]
\gamavg\avg{v_2}\avg{v_3}\\[0.1cm]
\gamavg\avg{v_3}^2 + \avg{p}\\[0.1cm]
\end{pmatrix}.
\end{equation*}
\end{corollary}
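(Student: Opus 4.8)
The plan is to repeat the proof of Theorem~\ref{thm:EC} almost verbatim, changing only the right-hand side of the discrete entropy conservation condition. Consider the $y$-direction first. By the same finite-volume contraction argument as in Section~\ref{sec:ECFlux}, an entropy conservative flux $\statevec{f}_2^{*,\ec}$ must satisfy $\jump{\statevec{w}}^T\statevec{f}_2^{*} = \jump{\Psi_2}$ with $\Psi_2 = pv_2$. First I would expand the right-hand side with the jump product rule \eqref{eq:jumpProps}, obtaining $\jump{\Psi_2} = \jump{pv_2} = \avg{v_2}\jump{p} + \avg{p}\jump{v_2}$.

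Next, the left-hand side uses exactly the jump in the entropy variables \eqref{eq:jumpEntVars} already computed in the proof of Theorem~\ref{thm:EC}, since $\jump{\statevec{w}}$ is independent of the spatial direction (and, as noted there, \eqref{eq:entVarJump1} takes the same form in the isothermal case because $\gamma=1$). Substituting yields
\begin{equation*}
\begin{aligned}
f_1^{*}\!\left(\gamma\jump{e}-\avg{v_1}\jump{v_1}-\avg{v_2}\jump{v_2}-\avg{v_3}\jump{v_3}\right)&+f_2^{*}\jump{v_1}+f_3^{*}\jump{v_2}+f_4^{*}\jump{v_3}\\
&= \avg{v_2}\jump{p}+\avg{p}\jump{v_2}.
\end{aligned}
\end{equation*}
As in the original proof, I would first match the unique $\jump{e}$ term on both sides to get $f_1^{*}\gamma\jump{e} = \avg{v_2}\jump{p}$, hence $f_1^{*} = \gamavg\avg{v_2}$ by Definition~\ref{def:gamAvg}. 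Then I would gather the coefficients of the remaining independent jumps:
\begin{equation*}
\begin{aligned}
\jump{v_1}&:\quad -f_1^{*}\avg{v_1} + f_2^{*} = 0,\\
\jump{v_2}&:\quad -f_1^{*}\avg{v_2} + f_3^{*} = \avg{p},\\
\jump{v_3}&:\quad -f_1^{*}\avg{v_3} + f_4^{*} = 0,
\end{aligned}
\end{equation*}
which solve immediately to $f_2^{*}=\gamavg\avg{v_1}\avg{v_2}$, $f_3^{*}=\gamavg\avg{v_2}^2+\avg{p}$, and $f_4^{*}=\gamavg\avg{v_2}\avg{v_3}$, i.e. the stated $\statevec{f}_2^{*,\ec}$. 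The $z$-direction is identical under the relabeling $v_2\mapsto v_3$, with the $\avg{p}$ term now landing in the $\jump{v_3}$ equation and hence in the fourth flux component. Consistency of both fluxes follows exactly as in Theorem~\ref{thm:EC} from the consistency of the $\gamma$-mean and of the arithmetic mean when $\statevec{u}_L=\statevec{u}_R$.

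The main obstacle is, frankly, only bookkeeping: there is no new analytical content beyond Theorem~\ref{thm:EC}. The one place to be careful is the slot in which the pressure term appears — the physical flux $\statevec{f}_2$ carries $p$ in its \emph{third} component (the $\rho v_2$-momentum equation) rather than its second, so the $\avg{p}$ contribution must be matched against the $\jump{v_2}$ equation rather than the $\jump{v_1}$ equation, and similarly for $\statevec{f}_3$ with the $\jump{v_3}$ equation. Keeping that alignment straight is all that distinguishes these derivations from the $x$-direction case.
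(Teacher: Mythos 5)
Your proposal is correct and is exactly the route the paper intends: the paper simply remarks that the proof mirrors Theorem~\ref{thm:EC} with the entropy conservation condition replaced by $\jump{\statevec{w}}^T\statevec{f}_{2,3}^{*}=\jump{\Psi_{2,3}}$ and the potentials $\Psi_2=pv_2$, $\Psi_3=pv_3$, which is precisely what you carry out. Your observation about the $\avg{p}$ term aligning with the $\jump{v_2}$ (respectively $\jump{v_3}$) equation is the only nontrivial bookkeeping point, and you handle it correctly.
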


Next, we give the entropy stable numerical flux functions in the other spatial coordinate directions. To do so, we give the appropriate discrete versions of the right eigenvector, eigenvalue, and diagonal scaling matrices which follow directly from Lemma \ref{lem:RZ}. The entropy stable numerical fluxes are constructed similarly to those given in Thm. \ref{thm:ES}.
\begin{corollary}[Entropy stable $y$ and $z$ fluxes]
The entropy stable flux functions in the $y$ and $z$ directions are
\begin{equation*}
\statevec{f}^{*,\es}_{2} = \statevec{f}^{*,\ec}_2 - \frac{1}{2}\hat{\nineMatrix{R}}_2|\hat{\nineMatrix{\Lambda}}_2|\hat{\nineMatrix{Z}}\hat{\nineMatrix{R}}_2^T\jump{\statevec{w}},\qquad\statevec{f}^{*,\es}_{3} = \statevec{f}^{*,\ec}_3 - \frac{1}{2}\hat{\nineMatrix{R}}_3|\hat{\nineMatrix{\Lambda}}_3|\hat{\nineMatrix{Z}}\hat{\nineMatrix{R}}_3^T\jump{\statevec{w}},
\end{equation*}
where $\statevec{f}^{*,\ec}_{2,3}$ come from Cor. \ref{cor:ECyz}, $\jump{\statevec{w}}$ is given in \eqref{eq:jumpEntVars}, and the appropriate dissipation matrices
\begin{equation*}
\begin{aligned}
\hat{\nineMatrix{R}}_2 &= \begin{bmatrix}
1 & 0 & 0 & 1\\[0.1cm]
\avg{v_1}  & 1 & 0 & \avg{v_1} \\[0.1cm]
\avg{v_2} - \sqrt{\aAvg}& 0 & 0 & \avg{v_2}+ \sqrt{\aAvg}\\[0.1cm]
\avg{v_3} & 0 & 1 & \avg{v_3}\\[0.1cm]
\end{bmatrix},
\quad
\hat{\nineMatrix{R}}_3 = \begin{bmatrix}
1 & 0 & 0 & 1\\[0.1cm]
\avg{v_1}  & 1 & 0 & \avg{v_1} \\[0.1cm]
\avg{v_2} & 0 & 1 & \avg{v_2}\\[0.1cm]
\avg{v_3} - \sqrt{\aAvg}& 0 & 0 & \avg{v_3}+ \sqrt{\aAvg}\\[0.1cm]
\end{bmatrix},\\[0.05cm]
\hat{\nineMatrix{\Lambda}}_2 &= \mathrm{diag}\left(\avg{v_2}-\sqrt{\aAvg}\,,\,\avg{v_2}\,,\,\avg{v_2}\,,\,\avg{v_2}+\sqrt{\aAvg}\right),\\[0.05cm]
\hat{\nineMatrix{\Lambda}}_3 &= \mathrm{diag}\left(\avg{v_3}-\sqrt{\aAvg}\,,\,\avg{v_3}\,,\,\avg{v_3}\,,\,\avg{v_3}+\sqrt{\aAvg}\right),\\[0.05cm]
\hat{\nineMatrix{Z}} &= \mathrm{diag}\left(\frac{\gamavg}{2\aAvg}\,,\,\gamavg\,,\,\gamavg\,,\,\frac{\gamavg}{2\aAvg}\right).
\end{aligned}
\end{equation*}
\end{corollary}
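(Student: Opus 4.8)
The plan is to follow the template of Theorem~\ref{thm:ES} essentially verbatim, replacing the $x$-direction objects by their $y$- and $z$-direction counterparts, and to use the rotational invariance of the polytropic Euler system so as to avoid recomputing everything. Fix $j\in\{2,3\}$ and let $P_j$ be the orthogonal $4\times4$ permutation that interchanges the $\rho v_1$ and $\rho v_j$ slots (fixing $\rho$ and the remaining momentum). A direct check gives $\statevec f_j(\statevec u)=P_j\statevec f_1(P_j\statevec u)$, together with $s(P_j\statevec u)=s(\statevec u)$, $\statevec w(P_j\statevec u)=P_j\statevec w(\statevec u)$, $f_j^s(\statevec u)=f_1^s(P_j\statevec u)$ and $\Psi_j(\statevec u)=\Psi_1(P_j\statevec u)$. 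Applying this to a pair of interface states $(\statevec u_L,\statevec u_R)$, and noting that $\gamavg$ and $\aAvg$ depend only on $(\rho_L,\rho_R)$ while the velocity averages permute, every discrete object built in Sect.~\ref{sec:ECFlux}--\ref{sec:ESFlux} transforms covariantly: Theorem~\ref{thm:EC} transports to Corollary~\ref{cor:ECyz} (so $\jump{\statevec w}^T\statevec f_j^{*,\ec}=\jump{\Psi_j}$), the discrete entropy Jacobian \eqref{eq:discreteHMatrix} obeys $\hat{\nineMatrix H}\mapsto P_j\hat{\nineMatrix H}P_j^T$ while still satisfying \eqref{eq:matrixConditionLemma}, and the eigendecomposition data of Lemma~\ref{lem:RZ} map to precisely the $\hat{\nineMatrix R}_j,\hat{\nineMatrix\Lambda}_j$ listed in the corollary, with $\hat{\nineMatrix Z}$ left unchanged because its entries are symmetric functions of $(\rho_L,\rho_R)$ alone.

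The key intermediate fact to carry over is $\hat{\nineMatrix H}\simeq\hat{\nineMatrix R}_j\hat{\nineMatrix Z}\hat{\nineMatrix R}_j^T$ with equality everywhere except the three shear diagonal entries. I would establish this by first noting that Lemma~\ref{lem:RZ} is equivalent to the single matrix identity $\hat{\nineMatrix H}-\hat{\nineMatrix R}\hat{\nineMatrix Z}\hat{\nineMatrix R}^T=(\avg{\rho}-\gamavg)\,\mathrm{diag}(0,1,1,1)$; conjugating this by $P_j$ and relabelling $v_1\leftrightarrow v_j$ sends $\hat{\nineMatrix R}\mapsto\hat{\nineMatrix R}_j$ and $\hat{\nineMatrix\Lambda}\mapsto\hat{\nineMatrix\Lambda}_j$ (up to a harmless reordering of the two shear eigenvectors, which is absorbed since $\hat{\nineMatrix Z}$ has equal shear entries), fixes $\hat{\nineMatrix Z}$, and leaves $\mathrm{diag}(0,1,1,1)$ invariant. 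Alternatively one simply re-runs the $16$-entry matching of Lemma~\ref{lem:RZ} with $\hat{\nineMatrix R}_j$ in place of $\hat{\nineMatrix R}$; this is routine but tedious and is where the symbolic-algebra check would be repeated.

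With these in hand, set $\nineMatrix D_j=\hat{\nineMatrix R}_j|\hat{\nineMatrix\Lambda}_j|\hat{\nineMatrix R}_j^{-1}$, so that $\statevec f_j^{*,\es}=\statevec f_j^{*,\ec}-\tfrac12\nineMatrix D_j\jump{\statevec u}\simeq\statevec f_j^{*,\ec}-\tfrac12\hat{\nineMatrix R}_j|\hat{\nineMatrix\Lambda}_j|\hat{\nineMatrix Z}\hat{\nineMatrix R}_j^T\jump{\statevec w}$ by \eqref{eq:matrixConditionLemma} and the previous paragraph; this is exactly the stated flux, the jump $\jump{\statevec w}$ from \eqref{eq:jumpEntVars} being unchanged since its first entry depends on $\spacevec v$ only through the permutation-invariant $\|\spacevec v\|^2$. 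Contracting the one-interface entropy update with $\jump{\statevec w}$, the conservative part contributes $-\jump{f_j^s}$ by Corollary~\ref{cor:ECyz} and the dissipation part equals $-\tfrac12\jump{\statevec w}^T\hat{\nineMatrix R}_j|\hat{\nineMatrix\Lambda}_j|\hat{\nineMatrix Z}\hat{\nineMatrix R}_j^T\jump{\statevec w}=-\tfrac12\bigl(\hat{\nineMatrix R}_j^T\jump{\statevec w}\bigr)^T|\hat{\nineMatrix\Lambda}_j|\hat{\nineMatrix Z}\bigl(\hat{\nineMatrix R}_j^T\jump{\statevec w}\bigr)\le0$ because $|\hat{\nineMatrix\Lambda}_j|$ and $\hat{\nineMatrix Z}$ are s.p.d. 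The main obstacle is genuinely just that one verification in paragraph two, namely that the \emph{same} diagonal scaling $\hat{\nineMatrix Z}$ simultaneously closes the discrete eigenvector scaling in all three coordinate directions; once the permutation bookkeeping is pinned down, the remainder is a mechanical repeat of Theorem~\ref{thm:ES}.
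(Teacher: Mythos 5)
Your proposal is correct, and it arrives at the same construction the paper intends: the paper offers no separate proof for this corollary, simply asserting that the $y$- and $z$-fluxes are ``constructed similarly'' to Thm.~\ref{thm:ES} with matrices that ``follow directly'' from Lemma~\ref{lem:RZ}, i.e.\ a re-run of the sixteen-entry matching and the quadratic-form argument in each direction. Where you genuinely add something is in replacing that re-run by a permutation-covariance argument: the observation that Lemma~\ref{lem:RZ} is equivalent to the single identity $\hat{\nineMatrix{H}}-\hat{\nineMatrix{R}}\hat{\nineMatrix{Z}}\hat{\nineMatrix{R}}^T=(\avg{\rho}-\gamavg)\,\mathrm{diag}(0,1,1,1)$ is correct (each of the three lower diagonal defects is exactly $\avg{\rho}-\gamavg$, as the proof of Lemma~\ref{lem:RZ} shows for the $(2,2)$ entry), and conjugation by the momentum-swapping permutation $P_j$ together with the relabelling $v_1\leftrightarrow v_j$ does fix $\hat{\nineMatrix{H}}$, $\hat{\nineMatrix{Z}}$, and $\mathrm{diag}(0,1,1,1)$ while sending $\hat{\nineMatrix{R}}\mapsto\hat{\nineMatrix{R}}_j$, so the directional scaling identities follow without repeating the symbolic computation. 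This buys a genuine economy --- one verification instead of three --- and makes precise why the \emph{same} $\hat{\nineMatrix{Z}}$ works in all directions (its entries depend only on $\gamavg$ and $\aAvg$, which are permutation-invariant); the paper's implicit approach buys nothing over yours except that it avoids introducing the covariance bookkeeping. The final contraction step, $-\tfrac12\bigl(\hat{\nineMatrix{R}}_j^T\jump{\statevec{w}}\bigr)^T|\hat{\nineMatrix{\Lambda}}_j|\hat{\nineMatrix{Z}}\bigl(\hat{\nineMatrix{R}}_j^T\jump{\statevec{w}}\bigr)\le 0$, matches Thm.~\ref{thm:ES} exactly.
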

\end{document}